\newtheorem{thm}{Theorem}[section]
\newtheorem{lmm}[thm]{Lemma}
\newtheorem{prp}[thm]{Proposition}
\theoremstyle{definition}
\newtheorem{dfn}[thm]{Definition}
\newtheorem{exa}[thm]{Example}
\newtheorem*{ack}{Acknowledgments}
\newtheorem{prob}[thm]{Problem}
\theoremstyle{remark}
\newtheorem*{rem}{Remark}
\def\Aut{\mathop{\mathrm{Aut}}\nolimits}
\def\im{\mathop{\mathrm{Im}}\nolimits}
\def\kr{\mathop{\mathrm{Ker}}\nolimits}
\def\rk{\mathop{\mathrm{rank}}\nolimits}
\def\Hom{\mathop{\mathrm{Hom}}\nolimits}
\title{Automorphisms of $K3$ surfaces and their applications}
\author[S.~Taki]{Shingo Taki}
\address{Department of Mathematics, Tokai University,
4-1-1, Kitakaname, Hiratsuka, Kanagawa, 259-1292, JAPAN}
\email{taki@tsc.u-tokai.ac.jp}
\urladdr{http://www.sm.u-tokai.ac.jp/~taki/}
\date{\today}
\begin{document}

\begin{abstract} 
This paper is a survey about $K3$ surfaces with an automorphism and log rational surfaces,
in particular, log del Pezzo surfaces and log Enriques surfaces.
It is also a reproduction on my talk at
"Mathematical structures of integrable systems and their applications"
held at Research Institute for Mathematical Sciences in September 2018.
\end{abstract}

\maketitle

\tableofcontents

\section{Introduction}\label{Introduction}
We can see a very interesting mathematical model such that algebra, geometry and analysis 
are harmony through elliptic curves. A $K3$ surface is a 2-dimensional analogue of an elliptic curve.
In algebraic geometry, it is a fundamental problem to study automorphisms of algebraic varieties.
We consider the problem for $K3$ surfaces which are 
the most important and attractive (at least for this author) of complex surfaces.

It is known that the second cohomology of a $K3$ surface has a lattice structure.
Thus the study of $K3$ surfaces can often be attributed to the study 
of lattices by the Torelli theorem \cite{PSS}.
In particular, this viewpoint is effective for the study of automorphisms of a $K3$ surface.

By the definition of $K3$ surfaces, these have a nowhere vanishing holomorphic 2-form. 
A finite group which acts on a $K3$ surface as an automorphism is called 
\textit{symplectic} or \textit{non-symplectic} 
if it acts trivially or non-trivially on a nowhere vanishing holomorphic 2-form, respectively.

For symplectic automorphism groups, we can see a relationship with
Mathieu groups which are sporadic simple groups.
Any finite group of sympletic automorphisms of a $K3$ surface is a subgroup of the 
Mathieu group $M_{23}$ with at least five orbits in its natural action on 24 letters \cite{mukai}.
On the other hand, the study of $K3$ surfaces with non-symplectic symmetry 
has arisen as an application of the Torelli theorem, 
and by now it has been recognized as closely related to classical geometry and special arithmetic quotients.
Most non-symplectic automorphisms give rational surfaces with 
at worst quotient singularities. In particular, some of these 
correspond to log del Pezzo surfaces or log Enriques surfaces.

This article is devoted to studies of non-symplectic automorphisms on $K3$ surfaces, and log rational surfaces.
We see basic results and recent progress for non-symplectic automorphisms.
We summarize the contents of this paper.
In section \ref{what}, we overview algebraic curves and algebraic surfaces via birational viewpoint.
Then we check the position of a $K3$ surfaces in the algebraic surfaces, and 
see basic properties of $K3$ surfaces.
Section \ref{auto} is the main part. We treat automorphisms on $K3$ surfaces. 
Especially, we study the classification of non-symplectic automorphisms in terms of $p$-elementary lattices.
In section \ref{log}, we give applications of theories of non-symplectic automorphisms on $K3$ surfaces.
In particular we apply them to log del Pezzo surfaces and log Enriques surfaces.

We will work over $\mathbb{C}$, the field of complex numbers, throughout this paper.

\begin{ack}
The author would like to thank Professor Shinsuke Iwao who is the organizer of the conference.
He was partially supported by Grant-in-Aid for Young Scientists (B) 15K17520 from JSPS.
This work was supported by the Research Institute for Mathematical Sciences, a Joint Usage/Research Center located in Kyoto University.
\end{ack}

\section{What is a $K3$ surface?}\label{what}
From the beginning of algebraic geometry, it has been understood that 
birationally equivalent varieties have many properties in common.
Thus it is natural to attempt to find in each birational equivalence 
class a variety which is simplest in some sense, and then study these varieties in detail.
In this section,  we check the position of $K3$ surfaces in algebraic surfaces through the birational viewpoint 
and review basic results of $K3$ surfaces.

The following is one of the most important birational invariants.
\begin{dfn}
Let $V$ be a smooth projective variety, $K_{V}$ a canonical divisor of $V$
and $\Phi_{nK_{V}}$ the rational map from $V$ to the projective space
associated with the linear system $|nK_{V}|$. 
For $n \geq 1$, we define the \textit{Kodaira dimension} $\kappa (V)$ to be the 
largest dimension of the image of $\Phi_{nK_{V}}$, 
or $\kappa (V)=-\infty$ if $|nK_{V}| =\emptyset$, hence 
\begin{equation*}
\kappa (V) := 
\begin{cases}
-\infty & |nK_{V}| =\emptyset \\
\max \{\dim \im\Phi_{nK_{V}}(V) | n\geq 1\} & |nK_{V}|\neq \emptyset
\end{cases}.
\end{equation*}
\end{dfn}

First, we recall the theory of  algebraic curves.
But the birational geometry dose not arise for curves because 
a rational map form one non-singular curve to another is in fact morphism.

\subsection{Algebraic curves}
We recall some general results about automorphisms of non-singular algebraic curves, i.e., compact Riemann surfaces.
See \cite[Chapter 4]{gtm52} or \cite[Chapter 2]{gh-pag} for more details.

The most important invariant of an algebraic curve $C$ is its genus $g:=\dim H^{1}(C, \mathcal{O}_{C})$.

\begin{exa}
For an algebraic curve $C$, $C\simeq \mathbb{P}^{1}$ if and only if $g=0$.
\end{exa}

\begin{exa}
We say an algebraic curve is \textit{elliptic} if $g=1$.
The following conditions are equivalent to each other.
\begin{itemize}
\item The genus of $C$ is 1.
\item $C$ is of the form $\mathbb{C}/\Lambda $ for some lattice $\Lambda \subset \mathbb{C}$. 
\item $C$ is realized as a non-singular cubic curve in $\mathbb{P}^{2}$.
\end{itemize}
\end{exa}

\begin{prp}
Every automorphism of $\mathbb{P}^{1}$ is of the form 
\[ \varphi (x) =\frac{ax+b}{cx+d}, \]
where $a,b,c,d \in \mathbb{C}$, $ad-bc\neq 0$.
That is $\Aut (\mathbb{P}^{1})=PGL_{1}(\mathbb{C})$.
\end{prp}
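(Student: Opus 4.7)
The plan is to show that every $\varphi \in \Aut(\mathbb{P}^{1})$ is a degree-one morphism, which is exactly the geometric content of being a linear fractional (M\"obius) transformation. The main tool is the standard dictionary between non-constant morphisms $\mathbb{P}^{1} \to \mathbb{P}^{1}$ and rational functions $f(x)/g(x)$ with $f,g \in \mathbb{C}[x]$ coprime (not both zero), under which the degree of the morphism equals $\max(\deg f, \deg g)$ and also equals the cardinality of a generic fiber.

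First, given $\varphi \in \Aut(\mathbb{P}^{1})$, I would fix an affine chart $\mathbb{A}^{1} \subset \mathbb{P}^{1}$ with coordinate $x$ and write $\varphi$ as $x \mapsto f(x)/g(x)$ with $\gcd(f,g)=1$. Since $\varphi$ is an automorphism it is bijective, so every fiber has exactly one point; hence $\deg \varphi = 1$, i.e.\ $\max(\deg f, \deg g) = 1$. Writing $f(x) = ax+b$ and $g(x) = cx+d$, the coprimality of $f,g$ together with the degree constraint forces $ad-bc \neq 0$, for otherwise $(a,b)$ and $(c,d)$ would be proportional vectors and $f/g$ would collapse to a constant.

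Second, for the converse and the group-theoretic identification, I would verify directly that any $\varphi(x) = (ax+b)/(cx+d)$ with $ad-bc \neq 0$ is invertible, with inverse $(dx - b)/(-cx + a)$, so it is in fact an automorphism. The composition law matches matrix multiplication in $GL_{2}(\mathbb{C})$, and two matrices induce the same transformation iff they differ by a nonzero scalar; this yields $\Aut(\mathbb{P}^{1}) \cong PGL_{2}(\mathbb{C})$.

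The main technical input will be the identification of $\deg\varphi$ with $\max(\deg f, \deg g)$: this can be justified via intersection theory (pull back the divisor of a generic point and count zeros of $f - tg$ in $\mathbb{P}^{1}$) or, equivalently, by identifying $\deg\varphi$ with the degree of the field extension $\mathbb{C}(x)/\varphi^{*}\mathbb{C}(x)$. Once this input is accepted, the rest of the proof is routine algebraic bookkeeping.
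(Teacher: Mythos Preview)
The paper does not supply a proof of this proposition; it is stated without argument in the survey section on algebraic curves, with a blanket reference to Hartshorne and Griffiths--Harris for details. So there is nothing in the paper to compare your approach against.

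Your argument is the standard one and is correct: identify a nonconstant morphism $\mathbb{P}^{1}\to\mathbb{P}^{1}$ with a rational function $f/g$, use bijectivity to force degree one, and then read off the M\"obius form and the group structure. The only point worth tightening is the passage from ``every fiber has one point'' to ``$\deg\varphi=1$'': over $\mathbb{C}$ this is fine because a generic fiber is reduced (or because the induced extension of function fields is separable), but it is worth saying explicitly rather than leaving it implicit. Incidentally, you correctly write $PGL_{2}(\mathbb{C})$; the paper's $PGL_{1}(\mathbb{C})$ is a typo.
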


\begin{prp}
Let $C$ be an algebraic curve of genus 1. 
We fix the point $O \in C$. 
The pair $(C,O)$ associates the group  
$\Aut (C,O):=\{\sigma \in \Aut (C) |\sigma (O)=O\}$.
We identify an element of $C$ as a translation.
Then we have the following exact sequence: 
\[ 0\rightarrow C\rightarrow \Aut (C) \rightarrow \Aut (C,O)\rightarrow 0. \]
Hence $\Aut (C)$ is a semi-product,  $\Aut (C)=C\rtimes \Aut (C,O)$.
Indeed, $\Aut (C,O)$ is a finite group of order 2, 4 or 6.
The order depends on the $j$-invariant of $C$.
\end{prp}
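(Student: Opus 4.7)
The plan is to first endow $(C,O)$ with its canonical abelian group law, so that $C$ becomes an elliptic curve with identity $O$ (this is already available from the uniformization $C\cong \mathbb{C}/\Lambda$ in the previous example, once we arrange $O$ to be the image of $0$). Every point $P\in C$ then defines a translation $t_{P}\colon Q\mapsto Q+P$, and $P\mapsto t_{P}$ is an injective homomorphism $C\hookrightarrow \Aut(C)$.

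To build the exact sequence, for any $\sigma\in \Aut(C)$ set $P:=\sigma(O)$ and $\tau:=t_{-P}\circ\sigma$; then $\tau(O)=O$, so $\tau\in \Aut(C,O)$ and $\sigma=t_{P}\circ\tau$. This expression is unique, which gives simultaneously the surjection $\Aut(C)\twoheadrightarrow \Aut(C,O)$ (with kernel exactly the translations) and a set-theoretic splitting $\Aut(C,O)\hookrightarrow \Aut(C)$. To upgrade this to a semi-direct product I need normality of the translation subgroup, which follows from the standard fact that any variety-automorphism of $C$ fixing $O$ is automatically a group automorphism of $(C,+)$ (seen most cleanly via the degree-zero divisor class description of the group law). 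Conjugation of $t_{Q}$ by such a $\tau$ is then again a translation, namely $t_{\tau(Q)}$, which both establishes normality and yields the composition rule $(t_{P}\tau)(t_{Q}\rho)=t_{P+\tau(Q)}(\tau\rho)$ of a semi-direct product.

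For the finiteness and the specific orders $2,4,6$, I would pull back to the universal cover: any $\tau\in \Aut(C,O)$ lifts to a biholomorphism $\widetilde\tau\colon \mathbb{C}\to \mathbb{C}$ fixing $0$, hence is linear, $\widetilde\tau(z)=\alpha z$, and the lattice condition $\alpha\Lambda=\Lambda$ forces $\alpha$ to lie in the unit group of the order $R_{\Lambda}:=\{\beta\in \mathbb{C}\mid \beta\Lambda\subseteq\Lambda\}$. The map $\tau\mapsto\alpha$ embeds $\Aut(C,O)$ into $R_{\Lambda}^{\times}$, which is finite since $R_{\Lambda}$ is either $\mathbb{Z}$ or an imaginary quadratic order.

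The main obstacle, and the only genuinely computational step, is enumerating these unit groups and matching them to the $j$-invariant. For a generic $\Lambda$ one has $R_{\Lambda}=\mathbb{Z}$, so $R_{\Lambda}^{\times}=\{\pm 1\}$ and $|\Aut(C,O)|=2$. The only enlargements occur when $\Lambda$ is homothetic to $\mathbb{Z}[i]$, giving $R_{\Lambda}^{\times}=\{\pm 1,\pm i\}$ of order $4$, or to $\mathbb{Z}[\omega]$ with $\omega=e^{2\pi i/3}$, giving the six sixth-roots of unity of order $6$. A short check that in each of these enlarged cases the candidate $\alpha$ actually descends to a well-defined automorphism of $C$ (immediate from $\alpha\Lambda=\Lambda$) completes the list. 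Finally, since the $j$-invariant classifies $C$ up to isomorphism, and the two exceptional lattices above correspond respectively to $j(C)=1728$ and $j(C)=0$, the order of $\Aut(C,O)$ is determined by $j(C)$, finishing the proof.
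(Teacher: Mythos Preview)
Your argument is correct and follows the standard route: construct the split exact sequence by decomposing an arbitrary $\sigma\in\Aut(C)$ as a translation followed by an origin-fixing automorphism, verify normality via the fact that origin-fixing variety automorphisms are group automorphisms, and then classify $\Aut(C,O)$ by lifting to the universal cover and identifying it with the unit group of the multiplier order $R_{\Lambda}$.

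There is, however, nothing to compare against: the paper is a survey and states this proposition without proof, directing the reader at the start of the subsection to \cite[Chapter~4]{gtm52} and \cite[Chapter~2]{gh-pag} for details. Your proof is precisely the classical argument one finds in those references, so it is entirely in line with what the paper intends.
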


\begin{prp}[Hurwitz]
Let $C$ be an algebraic curve of $g\geq 2$.
Then $\Aut (C)$ is a finite group of order at most $84(g-1)$.
\end{prp}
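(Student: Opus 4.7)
The plan is to combine two classical ingredients: a finiteness argument for $\Aut(C)$, and the Riemann--Hurwitz formula applied to the quotient map $C \to C/\Aut(C)$. For finiteness, since $g \geq 2$ the tangent bundle has degree $2-2g < 0$, so $H^{0}(C, T_{C}) = 0$ and there are no infinitesimal automorphisms. The tricanonical system $|3K_{C}|$ is very ample (for $g \geq 2$), so $\Aut(C)$ sits inside the projective linear group of that embedding as a closed algebraic subgroup; combined with the vanishing of infinitesimal automorphisms, this subgroup is $0$-dimensional, hence finite.

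Next, set $G = \Aut(C)$, $N = |G|$, and let $\pi : C \to C' := C/G$ be the quotient. Let $g' = g(C')$ and let the branch locus of $\pi$ consist of $r$ points with stabilizer orders $e_{1}, \ldots, e_{r}$. Riemann--Hurwitz gives
\[ 2g - 2 \;=\; N\left(2g' - 2 + \sum_{i=1}^{r}\left(1 - \frac{1}{e_{i}}\right)\right). \]
Writing $R$ for the expression in parentheses, it suffices to prove $R \geq 1/42$, since then $N \leq (2g-2)/R \leq 84(g-1)$.

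The argument now reduces to a case analysis on $g'$ and $r$. If $g' \geq 2$ then $R \geq 2$; if $g' = 1$ then $R > 0$ forces $r \geq 1$ and $R \geq 1/2$. The delicate case is $g' = 0$, where $R > 0$ forces $r \geq 3$ and the problem becomes a Diophantine optimization: minimize $R = (r-2) - \sum 1/e_{i}$ subject to positivity, over integers $e_{i} \geq 2$. Direct enumeration shows that $r = 3$ with $(e_{1}, e_{2}, e_{3}) = (2,3,7)$ attains the minimum $R = 1 - 1/2 - 1/3 - 1/7 = 1/42$, while all other admissible triples (e.g.\ $(2,3,8)$, $(2,4,5)$, $(3,3,4)$) and all $r \geq 4$ configurations (the smallest being $(2,2,2,3)$ with $R = 1/6$) give strictly larger values. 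The main obstacle is precisely this enumeration: verifying that no tuple of small integers slips below the $1/42$ threshold is a finite check, and it is this check that produces the sharp constant $84$.
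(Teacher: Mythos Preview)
Your proof is correct and is in fact the standard argument for Hurwitz's bound. Note, however, that the paper does not supply its own proof of this proposition: it is stated as a classical result attributed to Hurwitz, with the surrounding section referring the reader to \cite[Chapter 4]{gtm52} and \cite[Chapter 2]{gh-pag} for details on curves. So there is nothing in the paper to compare your argument against; your write-up simply fills in what the paper leaves as background.

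A couple of minor remarks on the write-up itself. First, the finiteness step could be streamlined: once $|3K_{C}|$ embeds $C$ in $\mathbb{P}^{N}$ and $\Aut(C)$ preserves this linear system, $\Aut(C)$ is a closed subgroup of $PGL_{N+1}$ stabilizing a nondegenerate curve, hence algebraic; the vanishing $H^{0}(C,T_{C})=0$ then gives finiteness. You have all the pieces, just be sure the logic reads as ``closed in $PGL$ $\Rightarrow$ algebraic group $\Rightarrow$ tangent space at identity is $H^{0}(T_{C})=0$ $\Rightarrow$ finite.'' Second, in the $g'=0$, $r\geq 5$ case you might add the one-line estimate $R \geq (r-2) - r/2 = r/2 - 2 \geq 1/2$ to close off the enumeration cleanly, rather than leaving it implicit.
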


From these propositions we have the following table.

\begin{center}
\begin{longtable}{|c||c|c|c|}
\hline
$C$ & $\mathbb{P}^{1}$ & elliptic curve & general type \\
\hline
$g(C)$ & 0 & 1 & $\geq 2$ \\
\hline
$\kappa (C)$ & $-\infty$ & 0 & 1 \\
\hline
$\Aut (C)$ & $PGL_{1}(\mathbb{C})$ & $C\rtimes$ finite group & finite group \\
\hline
\caption[Algebraic curves]{Algebraic curves}
\end{longtable}
\end{center}

The table implies that an automorphism group of an algebraic curve is an important invariant.
At least, it seems that it has the same information as the Kodaira dimension.

\subsection{Algebraic surfaces}
We treat the structure of birational transformations of algebraic surfaces.
For the details, see \cite[Chapter 5]{gtm52} or \cite[Chapter 2]{CAS}.

For surfaces we see that the structure of birational maps is very simple.
Birational maps between surfaces can be described by monoidal transformations, 
i.e., blowing up a single point.
Any birational transformation of surfaces can be factored into 
a finite sequence of monoidal transformations and their inverses.

\begin{prp}
Let $\phi :S\dashrightarrow S'$ be a birational map.
Then there is a surface $S''$ and a commutative diagram 
$$\xymatrix{
 & &S'' \ar[dl]_{f} \ar[dr]^{g}&   \\
&S\ar@{-->}[rr]_{\phi }& &S'} 
$$
where the morphism $f$, $g$ are composite of monoidal transformations.
\end{prp}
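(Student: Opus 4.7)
The plan is to reduce the statement to two classical facts about birational geometry of smooth surfaces: (a) elimination of indeterminacy of a rational map $\phi:S\dashrightarrow S'$ by iterated blow-ups on the source, and (b) the fact that any birational \emph{morphism} between smooth projective surfaces factors as a composition of monoidal transformations. Granting these, one proceeds as follows: apply (a) to obtain $f:S''\to S$, a composition of monoidal transformations, such that $g:=\phi\circ f$ is a well-defined morphism $S''\to S'$; since $\phi$ is birational and $f$ is birational, $g$ is also a birational morphism, and (b) then expresses $g$ as a composition of monoidal transformations. The commutative diagram in the statement is the defining diagram of $f$ and $g$.

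For step (a), the starting point is the observation that on a smooth surface the indeterminacy locus of a rational map to a projective variety has codimension at least two, hence consists of finitely many points. I would take such an indeterminacy point $p\in S$, perform the monoidal transformation $\sigma:\tilde S\to S$ centered at $p$, and consider the induced rational map $\tilde\phi:\tilde S\dashrightarrow S'$. The geometric picture is cleaner if one equivalently takes $\Gamma\subset S\times S'$ to be the closure of the graph of $\phi$ and passes to a resolution of its singularities; this already produces a smooth $S''$ dominating both $S$ and $S'$. Either way, after finitely many steps the map becomes a morphism.

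For step (b), let $h:T\to S$ be a birational morphism of smooth projective surfaces. Its exceptional locus is a union of irreducible curves, and a Zariski-style computation shows that at least one of them is a smooth rational curve $E$ with $E^{2}=-1$. By Castelnuovo's contractibility criterion, $E$ can be contracted by a monoidal transformation $T\to T_{1}$, yielding a factorization $h=h_{1}\circ (T\to T_{1})$ with $h_{1}:T_{1}\to S$ still a birational morphism of smooth surfaces. Iterating this procedure produces the desired factorization of $h$ into a sequence of monoidal transformations.

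The main obstacle in both steps is the same: proving that the inductive processes terminate. In (a), termination follows from a descending invariant attached to the indeterminacy — typically one shows that the local intersection number / multiplicity of the pulled-back linear system at the worst indeterminacy point strictly drops after a well-chosen blow-up, or one argues via the valuative criterion that the total transform of $\phi$ loses a fixed component after finitely many blow-ups. In (b), termination is controlled by the Picard number (or equivalently the second Betti number): each monoidal transformation increases $\rho$ by $1$, so starting from $T$ with $\rho(T)$ finite one can only contract finitely many exceptional curves before reaching a surface on which $h$ induces an isomorphism, which by Zariski's main theorem (the fibers of a birational morphism from a smooth surface are connected and the map is an isomorphism where no positive-dimensional fibers occur) is the endpoint of the procedure.
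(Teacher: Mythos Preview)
The paper states this proposition without proof, deferring to \cite[Chapter 5]{gtm52} and \cite[Chapter 2]{CAS} for details, so there is no in-paper argument to compare against. Your sketch is correct and is precisely the standard textbook proof found in those references: resolve indeterminacy by successive blow-ups to obtain $f:S''\to S$ with $g=\phi\circ f$ a morphism, then factor the resulting birational morphism $g$ via Castelnuovo's criterion, with termination governed by the Picard number.
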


\begin{exa}
The blowing up of $\mathbb{P}^{1}\times \mathbb{P}^{1}$ at a point 
is isomorphic to two times blowing up of $\mathbb{P}^{2}$ at a point. 
\end{exa}

\begin{dfn}
A surface $S$ is \textit{minimal} if every birational morphism $S\rightarrow S'$ is an isomorphism.
\end{dfn}

We show that every surface can be mapped to a minimal surface by a birational morphism. 
Indeed, if $S$ is not minimal, there is some surface $S'$ and a birational morphism $S\rightarrow S'$.
Every algebraic surface has a minimal model in exactly one of the classes in Table \ref{table-cas}.
This model is unique (up to isomorphisms) except for the surfaces with $\kappa =-\infty $.
And the minimal models for rational surfaces
are $\mathbb{P}^{2}$ and the Hirzebruch surfaces $F_{n}$, $n =0, 2, 3\dots $.


\begin{center}
\begin{longtable}{|c|c|c|c|c|c|}
\hline
$\kappa $ & $p_{g}$ & $P_{2}$ & $P_{12}$ & $q$ & $S$ \\
\hline
 & $0$ & $0$ & $0$ & $\geq 1$ & ruled surface \\
\cline{2-6}
\raisebox{1.5ex}[0cm][0cm]{$-\infty $}
 & $0$ & $0$ & $0$ & $0$ & rational surface \\
\hline
 & $1$ & $1$ & $1$ & $2$ & Abelian surface \\
\cline{2-6}
 & $0$ &    & $1$ & $1$ & hyper elliptic surface \\
\cline{2-6}
\raisebox{1.5ex}[0cm][0cm]{$0$}
 & $1$ & $1$ & $1$ & $0$ & $K3$ surface \\
\cline{2-6}
 & $0$ & $1$ & $1$ & $0$ & Enriques surface \\
\hline
$1$ &    &    & $\geq 1$ &    & elliptic surface \\
\hline
$2$ &    & $\geq 1$ & $\geq 1$ &    & general type \\
\hline
\caption[algebraic surfaces]{Classification of algebraic surfaces}\label{table-cas}
\end{longtable}
\end{center}
Here $q=\dim H^{1}(S,\mathcal{O}_{S})$, $p_{g}=\dim H^{2}(S, \mathcal{O}_{S})$ and 
$P_{n}=\dim H^{0}(S, \mathcal{O}_{X}(nK_{S}))$.

The birational geometry of algebraic surfaces was 
largely worked out by the Italian school of algebraic geometry in the years 1890--1910.
Then they inherently had Table \ref{table-cas}.
On the other hand, they knew when birational automorphism groups 
of algebraic surfaces, except $K3$ surfaces and Enriques surfaces, are finite.
Indeed, the Torelli theorem is necessary for studies of automorphisms of $K3$ surfaces and Enriques surfaces.

\subsection{$K3$ surfaces}
We shall give a review of the theory of $K3$ surfaces.
For details, see \cite[Chapter VIII]{BHPV}, \cite{126} and \cite{Lk3}.
Japanese have to read \cite{Ko-book}.

\begin{dfn}
Let $X$ be a compact complex surface. 
If its canonical line bundle $K_{X}$ is trivial and $H^{1}(X, \mathcal{O}_{X})=0$ then 
$X$ is called a \textit{$K3$ surface}
\footnote{The name $K3$ derives from the initials of three Mathematicians \textit{Kummer}, 
\textit{K\"ahler}, \textit{Kodaira} and also from the name of the mountain \textit{K2} in the Karakorum.}. 
\end{dfn}

Any $K3$ surface is K\"ahler (\cite{siu}), and most of them are not algebraic.
But we assume that all $K3$ surfaces are algebraic in this paper.
Indeed, we are interested in finite non-symplectic automorphisms on $K3$ surfaces. 
If $K3$ surface $X$ has a non-symplectic automorphism  of finite order then $X$ is algebraic.
See also Proposition \ref{non-symplectic-algebraic}.

\begin{rem}
Since $K_{X}$ is trivial, $H^{2,0}(X)=H^{0}(X, \Omega_{X}^{2})$ is 1-dimensional by the Serre duality.
Hence $X$ has a nowhere vanishing holomorphic $2$-form $\omega _{X}$.
\end{rem}

\begin{exa}
Let $X\subset \mathbb{P}^{3}$ be a nonsingular quartic surface. 
Then $K_{X}=0$ by the adjunction formula. 
Since $H^{i}(\mathbb{P}^{n},\mathcal{O}_{\mathbb{P}^{n}}(k)) = 0$ for all $0 < i < n$, $k\in \mathbb{Z}$,  
it follows that $H^{1}(X,\mathcal{O}_{X})=0$ from the exact sequence
\[ 0 \to  \mathcal{O}_{\mathbb{P}^{3}}(-4) \to \mathcal{O}_{\mathbb{P}^{3}} \to \mathcal{O}_{X} \longrightarrow  0. \] 
Thus $X$ is a $K3$ surface. 
\end{exa}

\begin{exa}
Let $T$ be a 2-dimensional complex torus and $\iota :T\to T$ the involution $a\mapsto -a$. 
Then there are 16 nodes in $T/\iota $.
The surfaces $X$ given by the minimal resolution $X\to T/\iota $ 
is a $K3$ surface. We call it the \textit{Kummer surface} of $T$. 

Note that if $T$ is not abelian then $X$ is not algebraic.
\end{exa}

By definition, $\chi (\mathcal{O}_{X})=\sum _{i=0}^{2} = \dim H^{i}(X, \mathcal{O}_{X})=2$ 
and the Noether formula thus yields
\[ 2=\chi (\mathcal{O}_{X}) =\frac{K_{X}^{2}+\chi (X)}{12}.\]
Hence the topological Euler number $\chi (X)=24$.
Moreover we have the following result.

\begin{prp}
Let $X$ be a $K3$ surface. Then 
\[ H_{1}(X,\mathbb{Z})=H^{1}(X,\mathbb{Z})=H_{3}(X,\mathbb{Z})=H^{3}(X,\mathbb{Z})=0.\]
And $H^{2}(X,\mathbb{Z})$ is torsion-free.
\end{prp}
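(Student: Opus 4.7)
The plan is to reduce everything to two inputs: $b_1(X) = 0$ and the simple connectedness of $X$; after these are in hand, the universal coefficient theorem and Poincar\'e duality take care of the rest mechanically.

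First I would note that, since $X$ is K\"ahler and $q = h^1(X, \mathcal O_X) = 0$ by the definition of a K3 surface, Hodge symmetry gives $b_1(X) = h^{1,0} + h^{0,1} = 2q = 0$, so $H_1(X, \mathbb Z)$ is a finite abelian group. To upgrade this to the vanishing $H_1(X, \mathbb Z) = 0$ I would invoke the classical fact that every K3 surface is simply connected: the standard argument deforms $X$ through the connected family of K3 surfaces to a smooth quartic hypersurface in $\mathbb P^3$, which is simply connected by the Lefschetz hyperplane theorem applied to $\pi_1(\mathbb P^3) = 0$; Hurewicz then gives $H_1(X, \mathbb Z) = \pi_1(X)^{\mathrm{ab}} = 0$. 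The universal coefficient theorem now yields $H^1(X, \mathbb Z) \cong \mathrm{Hom}(H_1(X, \mathbb Z), \mathbb Z) = 0$, and identifies the torsion subgroup of $H^2(X, \mathbb Z)$ with $\mathrm{Ext}^1(H_1(X, \mathbb Z), \mathbb Z) = 0$, so $H^2(X, \mathbb Z)$ is torsion-free. Finally, Poincar\'e duality on the closed oriented real 4-manifold $X$ converts these into $H_3(X, \mathbb Z) \cong H^1(X, \mathbb Z) = 0$ and $H^3(X, \mathbb Z) \cong H_1(X, \mathbb Z) = 0$.

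The main obstacle is the simple-connectedness step: the rational vanishing $b_1 = 0$ by itself is compatible with nontrivial torsion in $\pi_1(X)$, as an Enriques surface (where $\pi_1 = \mathbb Z/2$) shows, so a genuinely global ingredient is needed. An alternative route that sidesteps $\pi_1$ entirely is to first prove that $\mathrm{Pic}(X)$ is torsion-free: any nontrivial $n$-torsion class would give a cyclic \'etale cover $\pi \colon Y \to X$ of degree $n \geq 2$ with $K_Y = \pi^* K_X = 0$ and $\chi(\mathcal O_Y) = n\chi(\mathcal O_X) = 2n$, forcing $q(Y) = 2 - 2n < 0$, a contradiction. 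The exponential sequence, using $H^1(X, \mathcal O_X) = 0$, then identifies the torsion of $H^2(X, \mathbb Z)$ with a subgroup of the torsion-free $\mathrm{Pic}(X)$, so $H^2(X, \mathbb Z)$ is torsion-free; combining with $b_1 = 0$ recovers $H_1(X, \mathbb Z) = 0$ via UCT, and the rest proceeds as before.
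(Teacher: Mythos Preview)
The paper does not supply a proof of this proposition: it is a survey, and the statement is quoted as a known fact with implicit reference to the standard sources cited at the start of the subsection ([BHPV], [Be2], [Hu], [Kn4]). So there is no ``paper's own proof'' to compare against.

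That said, both of your routes are correct. The first argument (reduce to $\pi_1(X)=1$ via deformation to a smooth quartic and Lefschetz) is valid but, as you yourself flag, imports a genuinely nontrivial global input: that all complex $K3$ surfaces lie in a single connected family, hence are diffeomorphic. Your second route is the one that actually appears in the textbook references the paper points to (e.g.\ [BHPV, VIII]): show $\mathrm{Pic}(X)$ is torsion-free by the \'etale-cover Euler-characteristic trick, feed this through the exponential sequence to kill torsion in $H^2(X,\mathbb{Z})$, then recover $H_1=0$ from $b_1=0$ plus UCT, and finish with Poincar\'e duality. This second argument is self-contained from the definition of a $K3$ surface and does not rely on deformation theory, so it is both closer in spirit to the paper's elementary presentation and the one you should prefer here.
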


We consider the cup product on $H^{2}(X,\mathbb{Z})$:
\[ \langle \ , \ \rangle :H^{2}(X,\mathbb{Z})\times H^{2}(X,\mathbb{Z})\to \mathbb{Z}.\]
Then the pair $(H^{2}(X,\mathbb{Z}), \langle \ , \ \rangle )$
has a structure of a lattice.
Moreover by Wu's formula, the Poincar\'{e} duality and the Hirzebruch index theorem, 
we see that $(H^{2}(X,\mathbb{Z}), \langle \ , \ \rangle )$ is an even unimodular lattice of $\rk 22$ with signature (3,19).
By the classification of even unimodular indefinite lattices (\cite[Chapter 5, $\S 2$, Theorem 5]{Serre}), 
we have $H^{2}(X,\mathbb{Z})\simeq U^{\oplus 3}\oplus E_{8}^{\oplus 2}$.
Throughout this article we shall denote by $A_{m}$, $D_{n}$, $E_{l}$ 
the negative-definite root lattice of type $A_{m}$, $D_{n}$, $E_{l}$ respectively. 
We denote by $U$ the even indefinite unimodular lattice of rank 2.
For a lattice $L$, $L(m)$ is the lattice whose bilinear form is the one on $L$ multiplied by $m$.

\begin{dfn}
Let $\omega _{X}$  be a nowhere vanishing holomorphic $2$-form on $X$.
Set $S_{X}:=\{x\in H^{2}(X,\mathbb{Z})|\langle x, \omega _{X} \rangle =0\}$ and 
$T_{X}:=S_{X}^{\perp }$ in $H^{2}(X,\mathbb{Z})$.
These are called the \textit{N\'{e}ron-Severi lattice} and the \textit{transcendental lattice}, respectively.
\end{dfn}

\begin{prp}
Let $r$ be the Picard number of $X$, i.e. $r=\rk S_{X}$.
Then we have $r \leq 20$.
And $X$ is projective if and only if the signature of $S_{X}$ is $(1, r-1)$,
i.e., $S_{X}$ is a hyperbolic lattice.
\end{prp}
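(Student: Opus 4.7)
The plan is to combine the Hodge decomposition of $H^{2}(X,\mathbb{C})$ with the signature calculation of the cup-product pairing.

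\textbf{Step 1 (Locating $S_X$ inside the Hodge decomposition).} First I would extend the cup product $\mathbb{C}$-bilinearly and recall that the Hodge decomposition $H^{2}(X,\mathbb{C}) = H^{2,0}\oplus H^{1,1}\oplus H^{0,2}$ is orthogonal with respect to this pairing. Since $\omega_{X}$ spans $H^{2,0}(X)$ and $\overline{\omega_{X}}$ spans $H^{0,2}(X)$, conjugating $\langle x, \omega_{X} \rangle = 0$ gives $\langle x, \overline{\omega_{X}} \rangle = 0$ for any real class $x$. Hence such an $x$ is orthogonal to all of $H^{2,0}\oplus H^{0,2}$, which forces $S_{X}\otimes \mathbb{R}\subseteq H^{1,1}(X,\mathbb{R})$. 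Since $\dim_{\mathbb{R}} H^{1,1}(X,\mathbb{R}) = 22 - 2 = 20$, this already yields $r\leq 20$.

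\textbf{Step 2 (Signature of $H^{1,1}_{\mathbb{R}}$).} I would then observe that the real $2$-plane spanned by $\omega_{X}+\overline{\omega_{X}}$ and $i(\omega_{X}-\overline{\omega_{X}})$ lies inside $H^{2,0}\oplus H^{0,2}$ and is positive-definite, because $\int_{X}\omega_{X}\wedge \overline{\omega_{X}}>0$ for a nowhere vanishing holomorphic $2$-form. Since the total signature of $H^{2}(X,\mathbb{R})$ is $(3,19)$, the orthogonal complement $H^{1,1}(X,\mathbb{R})$ must carry signature $(1,19)$.

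\textbf{Step 3 (Projectivity).} For the forward direction, if $X$ is projective and $L$ is an ample line bundle, then $c_{1}(L)\in S_{X}$ satisfies $c_{1}(L)^{2}=L\cdot L>0$. Combined with Step 2, the Hodge index theorem forces the orthogonal complement of $c_{1}(L)$ in $S_{X}\otimes \mathbb{R}$ to be negative-definite of dimension $r-1$, so $S_{X}$ has signature $(1,r-1)$. For the converse, assume $S_{X}$ has signature $(1,r-1)$, so that some $h\in S_{X}$ satisfies $h^{2}>0$. Using that every $K3$ surface is Kähler (Siu), the class $h$ is an integral $(1,1)$-class of positive square, and the Kodaira embedding theorem (after possibly replacing $h$ by $-h$ and passing to a high multiple, to obtain an ample class in the positive cone) produces a projective embedding of $X$.

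The main obstacle is the converse half of Step 3: passing from the purely Hodge-theoretic condition that $S_{X}$ contains a class of positive square to an actual projective embedding. This is the non-trivial direction because it uses Siu's theorem to guarantee that $X$ is Kähler and then invokes Kodaira's criterion to promote a positive integral $(1,1)$-class to an ample line bundle. The rest of the proof is a direct combination of Hodge theory and elementary signature arithmetic.
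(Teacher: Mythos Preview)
The paper is a survey and states this proposition without proof, so there is no in-paper argument to compare against. Your outline is the standard one and Steps~1 and~2, together with the forward direction of Step~3, are correct as written.

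There is, however, a genuine gap in the converse of Step~3. From $h\in S_{X}$ with $h^{2}>0$ you cannot reach an ample class by ``replacing $h$ by $-h$ and passing to a high multiple'': if $h$ is not a K\"ahler class then neither is $nh$ for any $n$, so Kodaira's embedding theorem does not apply directly. What is actually needed is an argument that produces an integral class in the K\"ahler cone. For a $K3$ surface one usually does this via Riemann--Roch: since $\chi(\mathcal{O}_{X}(h))=2+h^{2}/2>0$ and $h^{2}(\mathcal{O}_{X}(h))=h^{0}(\mathcal{O}_{X}(-h))$ by Serre duality with $K_{X}\simeq\mathcal{O}_{X}$, one of $\pm h$ is effective; one then analyses the linear system (or, equivalently, uses reflections in $(-2)$-classes to move $h$ into the ample cone, which is a fundamental chamber for the Weyl group action on the positive cone) to obtain a big and nef, hence eventually ample, class. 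You correctly flag this direction as the non-trivial one, but the mechanism you describe is not the right one.
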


The most interesting structure associated to a $K3$ surface is its weight-two 
Hodge structure on $H^{2}(X,\mathbb{Z})$ given by the decomposition 
\[ H^{2}(X,\mathbb{C})=H^{2,0}(X)\oplus H^{1,1}(X)\oplus H^{0,2}(X). \]

By the definition of $K3$ surfaces, $h^{2,0}(X)=h^{0,2}(X)$.
Therefore we have the following proposition.
\begin{prp}
Let $X$ be a $K3$ surface. Then 
$h^{2,0}(X)=h^{0,2}(X)=1$, $h^{1,1}(X)=20$.
\end{prp}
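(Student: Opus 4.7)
The plan is to use what the excerpt has already established: that $K_{X}$ is trivial, that $H^{1}(X,\mathcal{O}_{X})=0$, and that $H^{2}(X,\mathbb{Z})\simeq U^{\oplus 3}\oplus E_{8}^{\oplus 2}$ is a free $\mathbb{Z}$-module of rank $22$.

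First I would compute $h^{0,2}(X)=\dim H^{2}(X,\mathcal{O}_{X})$ via Serre duality: since $X$ is a smooth projective (or compact K\"ahler) surface of dimension $2$, one has $H^{2}(X,\mathcal{O}_{X})\cong H^{0}(X,\omega_{X})^{\ast}$. The triviality of $K_{X}$ gives $\omega_{X}\cong \mathcal{O}_{X}$, hence $H^{0}(X,\omega_{X})\cong H^{0}(X,\mathcal{O}_{X})\cong \mathbb{C}$, so $h^{0,2}(X)=1$. Then Hodge symmetry (already invoked in the sentence preceding the proposition, $h^{2,0}=h^{0,2}$) yields $h^{2,0}(X)=1$; alternatively, one identifies $H^{2,0}(X)=H^{0}(X,\Omega^{2}_{X})=H^{0}(X,\omega_{X})=\mathbb{C}\omega_{X}$ directly, which also matches the remark noting that $X$ carries a nowhere vanishing holomorphic $2$-form.

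For $h^{1,1}$, I would use the Hodge decomposition of $H^{2}(X,\mathbb{C})$:
\begin{equation*}
b_{2}(X)=h^{2,0}(X)+h^{1,1}(X)+h^{0,2}(X).
\end{equation*}
The second Betti number is read off from the lattice structure recalled above: $H^{2}(X,\mathbb{Z})\simeq U^{\oplus 3}\oplus E_{8}^{\oplus 2}$ has rank $22$, so $b_{2}(X)=22$. (As a sanity check one could instead use Noether's formula $\chi(X)=24$ together with $b_{1}(X)=0$, since $H^{1}(X,\mathbb{Z})=0$, to rederive $b_{2}=22$.) Substituting $h^{2,0}=h^{0,2}=1$ gives $h^{1,1}(X)=22-2=20$.

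There is essentially no main obstacle here once the earlier propositions of the section are granted; the only subtle point is that Serre duality and the Hodge decomposition both require $X$ to be a compact K\"ahler manifold, but this is exactly the content of Siu's theorem cited in the paper. So the proof reduces to a short chain: $K_{X}=0\Rightarrow h^{0,2}=1$, Hodge symmetry $\Rightarrow h^{2,0}=1$, and rank of the intersection lattice $\Rightarrow h^{1,1}=20$.
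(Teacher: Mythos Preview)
Your argument is correct and matches the paper's own reasoning, which is not given as a formal proof but is scattered in the surrounding text: the earlier Remark already uses Serre duality and $K_X=0$ to get $\dim H^{2,0}(X)=1$, the sentence just before the proposition records the Hodge symmetry $h^{2,0}=h^{0,2}$, and the rank $22$ of $H^{2}(X,\mathbb{Z})$ was established a few lines above. Your write-up simply assembles these pieces explicitly, so there is nothing to add.
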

Since the Hodge decomposition is orthogonal with respect to 
the cup product, it is in fact completely determined by the complex line 
$H^{2,0}(X)\subset H^{2}(X,\mathbb{C})$.

\begin{dfn}
A \textit{Hodge isometry} is an isomorphism 
from $H^{2}(X,\mathbb{Z})$ to $H^{2}(Y,\mathbb{Z})$ 
which preserves the cup product and maps $H^{2,0}(X)$ to $H^{2,0}(Y)$.
\end{dfn}

The following theorem is the most important theorem for $K3$ surfaces.

\begin{thm}[Global Torelli Theorem]\label{gtfk3}
Two $K3$ surfaces $X$ and $Y$ are isomorphic if and only if 
there exists a Hodge isometry 
$\varphi :H^{2}(X,\mathbb{Z})\to H^{2}(Y,\mathbb{Z})$.
If $\varphi $ maps a K\"ahler class on $X$ to 
a K\"ahler class on $Y$, then there exists a unique isomorphism 
$f:Y\to X$ with $f^{\ast }=\varphi $.
\end{thm}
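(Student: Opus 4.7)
My plan is to follow the classical strategy of Piatetski-Shapiro--Shafarevich (with the refinements of Burns--Rapoport and Looijenga--Peters): first prove a Torelli-type statement for Kummer K3 surfaces by reducing to Torelli for complex tori, and then propagate it to all K3 surfaces via the density of the Kummer locus in the period domain combined with the local Torelli theorem. The easy direction is immediate: for any isomorphism $f\colon Y\to X$, the pullback $f^{\ast}$ on second cohomology preserves the cup product, the Hodge decomposition, and the K\"ahler cone, so one may take $\varphi = f^{\ast}$.

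For the converse I would set up the period domain
\[
\Omega = \{[\omega]\in \mathbb{P}((U^{\oplus 3}\oplus E_{8}^{\oplus 2})\otimes \mathbb{C}) \mid \langle\omega,\omega\rangle = 0,\ \langle\omega,\bar\omega\rangle > 0\}.
\]
A marking $\eta\colon H^{2}(X,\mathbb{Z})\xrightarrow{\sim} U^{\oplus 3}\oplus E_{8}^{\oplus 2}$ makes each K3 surface define a period point $\eta(H^{2,0}(X))\in \Omega$, and the existence of a Hodge isometry $\varphi$ amounts to $X$ and $Y$ having the same period point under compatible markings. The local Torelli theorem, derived from Griffiths' infinitesimal period relation via the Kodaira--Spencer isomorphism $H^{1}(X,T_{X})\simeq H^{1,1}(X)$ (which uses $K_{X}=0$), shows that the period map on the moduli space of marked K3 surfaces is a local biholomorphism. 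The crucial Kummer input is that for complex $2$-tori $T,T'$ every Hodge isometry $H^{2}(T,\mathbb{Z})\to H^{2}(T',\mathbb{Z})$ is induced by an isomorphism $T\simeq T'$ (up to $\pm 1$), which descends to the associated Kummer K3 surfaces. Since Kummer periods form a dense subset of $\Omega$, I would embed $X$ and $Y$ as fibers of a smooth family of marked K3 surfaces whose nearby fibers are Kummer, transport the isomorphism across those fibers, and then specialize.

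To obtain the refined statement with a unique $f$, I would appeal to the chamber structure of the positive cone of $S_{X}$: the group generated by reflections in effective $(-2)$-classes acts simply transitively on the chambers, and the K\"ahler cone is the chamber containing ample classes. Any Hodge isometry can be adjusted by such reflections together with $\pm \mathrm{id}$ to send the K\"ahler cone to the K\"ahler cone; once this compatibility is assumed, $\varphi$ is realized by a unique $f$, since an automorphism of a K3 surface acting trivially on $H^{2}$ must be the identity. The main obstacle is the density-plus-deformation step: converting the set-theoretic equality of periods on the dense Kummer locus into an actual geometric isomorphism of the given pair $X,Y$ requires the delicate moduli-theoretic gluing of Burns--Rapoport (or, equivalently, construction of the universal family over the moduli of marked K3 surfaces together with a properness argument) rather than only an abstract coincidence of Hodge structures.
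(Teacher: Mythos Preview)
The paper does not prove this theorem at all: it is a survey, and the Global Torelli Theorem is simply stated with attribution to \cite{PSS}, then used as a black box in what follows. So there is no ``paper's own proof'' to compare against.

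That said, your outline is a faithful high-level sketch of the classical Pyatetskii-Shapiro--Shafarevich/Burns--Rapoport strategy: local Torelli from $K_X=0$, Torelli for Kummer surfaces via tori, density of the Kummer locus, and the Weyl-chamber argument for the K\"ahler-cone refinement and uniqueness. One point deserves sharpening: the sentence ``embed $X$ and $Y$ as fibers of a smooth family of marked K3 surfaces whose nearby fibers are Kummer'' is not literally how the argument runs---the Kummer locus is dense but nowhere open, so one cannot arrange an open neighborhood of Kummer fibers. The actual step is rather that the local isomorphism given by the period map, combined with its injectivity on the dense Kummer locus, forces generic injectivity (a degree-one argument), and then the Burns--Rapoport analysis of the fiber over a single period point (controlling non-separatedness via the $(-2)$-reflection group and $\pm\mathrm{id}$) handles the remaining points. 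You correctly flag this as the delicate part; just be aware that ``specialize from nearby Kummer fibers'' is a placeholder for that more intricate moduli-theoretic reasoning, not a proof in itself.
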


The \textit{period} of a $K3$ surface $X$ is by definition the
natural weight-two Hodge structure on the lattice $H^{2}(X,\mathbb{Z})$.
Thus, Theorem \ref{gtfk3} asserts that two $K3$ surfaces are isomorphic 
if and only if their periods are isomorphic.
The second assertion of Theorem \ref{gtfk3} allows us to describe the 
automorphism group $\Aut (X)$ as the group of Hodge isometries of 
$H^{2}(X,\mathbb{Z})$ preserving K\"ahler classes.

A non-singular rational curve on $X$ is a $(-2)$-vector in $H^{2}(X,\mathbb{Z})$.
Every $(-2)$-class $\delta \in H^{2}(X,\mathbb{Z})$ defines a reflection
\[ s_{\delta }:H^{2}(X,\mathbb{Z})\rightarrow H^{2}(X,\mathbb{Z}), \ \ \ \ x\mapsto x+\langle x, \delta \rangle \delta .\]

For a lattice $L$, we put $W(L):=\langle \{s_{\delta }\in O(L)|\delta \in L, \delta ^{2}=-2\} \rangle$.
Let $\psi $ be an isometry of $L$.
Since $\psi \circ s_{\delta }\circ \psi ^{-1}=s_{\psi (\delta )}$, 
$W(L)$ is a normal subgroup in $O(L)$.

Theorem \ref{gtfk3} says the natural composite homomorphism 
\[ \Aut (X) \to  O(S_{X})\to O(S_{X})/W(S_{X})\]
has a finite kernel or a finite cokernel (see \cite{PSS}).
Hence $\Aut (X)$ and $O(S_{X})/W(S_{X})$ are isomorphic  up to finite groups.
In particular $\Aut (X)$ is finite if and only if $W(S_{X})$ has finite index in $O(S_{X})$.

The problem of describing algebraic $K3$ surfaces with a finite automorphism group
was reduced to a purely algebraic problem, i.e., describe the hyperbolic lattices 
$S$ for which the factor group $O(S)/W(S)$ is finite.
Indeed Nikulin (\cite{Ni3, Ni4}) has completely classified the N\'{e}ron-Severi lattices $S_{X}$
of algebraic $K3$ surfaces with finite automorphism groups.

\section{Automorphisms of $K3$ surfaces}\label{auto}

In this section, we recall some progress on finite automorphism groups of $K3$ surfaces.
Let $G\subset \Aut (X)$ be a finite subgroup.
By definition of $K3$ surfaces, there exists a unique nowhere vanishing 
holomorphic $2$-form on $X$, up to constant.
Hence for every $g\in G$,  there exist some non-zero scalar $\alpha (g)\in \mathbb{C}^{\times }$
which satisfy $g^{\ast }\omega _{X}=\alpha (g)\omega _{X}$. 
Clearly, $\alpha :G\rightarrow \mathbb{C}^{\times }$ is a group homomorphism.
Since $\alpha (G)$ is a subgroup of $\mathbb{C}^{\times }$,  it is a cyclic group of order $I$.
Then we have the following exact sequence
\begin{equation}\label{basic-es}
1\rightarrow \kr \alpha \rightarrow G \rightarrow \mathbb{Z}/I \mathbb{Z} \rightarrow 1. 
\end{equation}

\begin{exa}
Put $F:=X_{0}^{4}+X_{1}^{4}+X_{2}^{4}+X_{3}^{4}$.
Let $X$ be the $K3$ surface defined by the quartic surface $F=0$ in $\mathbb{P}^{3}$.
Clearly, $G:=\mathfrak{S}_{4}\ltimes (\mathbb{Z}/4\mathbb{Z})^{3}$ acts on $X$ as projective transformations.
Since $\omega_{X}$ is given by the Pincar\'{e} residue of 
\[\frac{d\left(\frac{X_{1}}{X_{0}}\right) \wedge d\left(\frac{X_{2}}{X_{0}}\right) \wedge d\left(\frac{X_{3}}{X_{0}}\right)}{F}, \]
we have $1\to \mathfrak{S}_{4}\ltimes (\mathbb{Z}/4\mathbb{Z})^{2} \rightarrow G \to \mathbb{Z}/4\mathbb{Z} \to 1$.
\end{exa}

\begin{dfn}
Let $g$ be an automorphism of $X$.
If $g^{\ast }\omega _{X}=\omega _{X}$ then $g$ is called a \textit{symplectic automorphism}.

Let $G$ be an automorphism group of $X$.
If every $g\in G$ is symplectic then $G$ is called a 
\textit{symplectic automorphism group}.
\end{dfn}

\begin{lmm}
Let $G$ be a finite symplectic automorphism group of $X$. 
Then $G$ has fixed points.
\end{lmm}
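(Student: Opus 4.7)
The plan is to argue by contradiction: suppose $G \neq \{1\}$ acts \emph{freely} on $X$, and derive a numerical impossibility by running Noether's formula on the quotient $Y := X/G$. The conclusion that $G$ has fixed points then means that some non-identity element of $G$ has a fixed point, i.e.\ the action is not free.

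First I set up the quotient. Since $G$ acts freely and $X$ is projective, $\pi\colon X \to Y$ is an \'etale Galois cover and $Y$ is a smooth projective surface. The symplectic hypothesis says $g^{\ast}\omega_{X} = \omega_{X}$ for every $g \in G$, so $\omega_{X}$ is $G$-invariant and, because $\pi$ is \'etale, descends to a nowhere vanishing holomorphic $2$-form $\omega_{Y}$ on $Y$. This gives $K_{Y} \cong \mathcal{O}_{Y}$, so $K_{Y}^{2} = 0$ and $\kappa(Y) = 0$; in particular $Y$ is automatically minimal, since any $(-1)$-curve $E$ would force $K_{Y} \cdot E = -1 \neq 0$.

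Next I combine two standard identities. Multiplicativity of the topological Euler number along \'etale covers gives $\chi(Y) = \chi(X)/|G| = 24/|G|$, and Noether's formula then yields
\[ \chi(\mathcal{O}_{Y}) \;=\; \frac{K_{Y}^{2} + \chi(Y)}{12} \;=\; \frac{2}{|G|}. \]
Since $\chi(\mathcal{O}_{Y}) \in \mathbb{Z}$, this forces $|G| \in \{1,2\}$. If $|G| = 2$, then $Y$ is a minimal surface with $\kappa(Y) = 0$, $\chi(\mathcal{O}_{Y}) = 1$ and $K_{Y} \cong \mathcal{O}_{Y}$. But Table \ref{table-cas} classifies the minimal surfaces with $\kappa = 0$ and $\chi(\mathcal{O}) = 1$ as Enriques surfaces, for which the canonical bundle is nontrivial (only $2K_{Y} = 0$, and $p_{g} = 0$ forces $K_{Y} \not\cong \mathcal{O}_{Y}$); this contradicts $K_{Y} \cong \mathcal{O}_{Y}$. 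Hence $|G| = 1$, contradicting $G \neq \{1\}$, so the assumed free action is impossible.

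I expect the only delicate point to be the descent of $\omega_{X}$ along the \'etale cover $\pi$, which uses $G$-invariance together with the unramifiedness of $\pi$; once that is in hand the rest is a one-line numerical check combined with a look-up in the classification Table \ref{table-cas}.
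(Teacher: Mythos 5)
Your argument is correct and shares its core with the paper's proof: assume the action is free, observe that $X \to X/G$ is \'etale so that $\omega_{X}$ descends and $\chi(X) = |G|\cdot\chi(X/G)$, and extract a numerical contradiction. The difference lies in the last step. The paper additionally notes that $H^{0}(X/G,\Omega^{1}_{X/G}) = H^{0}(X,\Omega^{1}_{X}) = 0$ (a $1$-form on the quotient pulls back to a $1$-form on $X$), so the quotient has trivial canonical bundle \emph{and} vanishing irregularity and is therefore itself a $K3$ surface; then $24 = \chi(X) = |G|\cdot\chi(X/G) = 24|G|$ kills every $|G| > 1$ at once. You skip the computation of $q(X/G)$ and instead feed $K_{Y}^{2}=0$ and $\chi(Y) = 24/|G|$ into Noether's formula, getting $\chi(\mathcal{O}_{Y}) = 2/|G|$; integrality leaves the residual case $|G| = 2$, which you then correctly exclude because a minimal surface with $\kappa = 0$ and $\chi(\mathcal{O}) = 1$ must be Enriques, incompatible with $p_{g}(Y) = 1$ forced by the trivial canonical bundle. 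Both are complete proofs; the paper's is marginally shorter because identifying the quotient as a $K3$ makes the Euler-number count immediate, while yours trades that identification for Noether's formula plus one extra case analysis against Table \ref{table-cas}.
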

\begin{proof}
We assume that $G$ has no fixed points.
Since the natural map $X\to X/G$ is \'{e}tale, 
there exits a nowhere vanishing holomorphic 2-form on $X/G$.
Moreover $H^{0}(X/G,\Omega _{X/G}^{1})=H^{0}(X,\Omega _{X}^{1})=0$.
Hence $X/G$ is a $K3$ surface.

Now, it follows that $\chi (X)=|G|\cdot \chi (X/G)$.
The Euler number of $K3$ surfaces is 24.
This implies that $|G|=1$.
\end{proof}

\begin{thm}[\cite{Ni2}]\label{Ni-fix}
Let $g$ be a symplectic automorphism of order $n$ on $X$.
Then $n \leq 8$. Moreover, the set of fixed points of $g$ 
has cardinality $8, 6, 4, 4, 2, 3$, or $2$, 
if $n=2, 3, 4, 5, 6, 7$, or $8$, respectively.
\end{thm}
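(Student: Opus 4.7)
The plan is to use the holomorphic Lefschetz fixed point formula of Atiyah--Bott, applied to the structure sheaf and to every power of $g$. First I would show that $\mathrm{Fix}(g)$ consists of isolated points. At any fixed $p$, the differential $dg_p$ lies in $\mathrm{SL}(T_pX)$ because $g^{\ast }\omega _{X}=\omega _{X}$, it has finite order, and hence is conjugate to $\mathrm{diag}(\zeta,\zeta^{-1})$ for a root of unity $\zeta$. If $\zeta=1$, the entire tangent action would be trivial, so $g$ would act trivially on a neighborhood of $p$ and hence on $X$, contradicting its finite non-trivial order; thus $\zeta\neq 1$ and $p$ is an isolated, non-degenerate fixed point.

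Next I would compute the holomorphic Lefschetz number. On $X$ we have $H^{0}(X,\mathcal{O}_{X})=\mathbb{C}$, $H^{1}(X,\mathcal{O}_{X})=0$, and Serre duality identifies $H^{2}(X,\mathcal{O}_{X})$ with the dual of $\mathbb{C}\cdot \omega _{X}$, on which $g^{\ast }$ is the identity by the symplectic hypothesis. Hence $\sum _{i}(-1)^{i}\mathrm{tr}(g^{\ast }|H^{i}(X,\mathcal{O}_{X}))=2$, and Atiyah--Bott yields
\[ 2\;=\;\sum _{p\in \mathrm{Fix}(g)}\frac{1}{(1-\zeta _{p})(1-\zeta _{p}^{-1})}, \]
where each $\zeta _{p}$ is a primitive $d_{p}$-th root of unity with $d_{p}\mid n$. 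Writing $m_{d}$ for the number of fixed points of $g$ whose local rotation has order exactly $d$, I would apply the same identity to each power $g^{k}$; each such power is symplectic and its fixed set is stratified by the divisors of $n$. This produces a finite linear system in the $m_{d}$ with coefficients given by cyclotomic sums of the form $\sum _{a}|1-\zeta ^{a}|^{-2}$.

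The real work, and the main obstacle, is the combinatorial analysis of this system. For $n=p$ prime, averaging the identity over the $p-1$ symplectic powers $g^{k}$ and using the classical sum $\sum _{k=1}^{p-1}|1-\zeta ^{k}|^{-2}=(p^{2}-1)/12$ collapses everything to $|\mathrm{Fix}(g)|=24/(p+1)$. Integrality forces $p+1\mid 24$, and the cases $p=11,23$ are ruled out by the compatible action of $g^{\ast }$ on the unimodular lattice $H^{2}(X,\mathbb{Z})$ (acting as the identity on $T_{X}$, with characteristic polynomial $(x-1)^{a}\Phi _{p}(x)^{b}$ satisfying $a+(p-1)b=22$ and $a-b=|\mathrm{Fix}(g)|-2$), leaving $p\in \{2,3,5,7\}$ with fixed-point counts $8,6,4,3$. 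For composite $n$ I would play the Lefschetz identities for $g$ against those for its proper powers: this admits a consistent non-negative integer solution exactly for $n\in \{4,6,8\}$, with $4,4,2$ fixed points respectively, and excludes all other candidates $n=9,10,12,\ldots$. The most delicate case is $n=8$, where the identities for $g$, $g^{2}$ and $g^{4}$ must be combined in concert because the single relation for $g$ underdetermines the distribution of local types.
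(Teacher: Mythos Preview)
The paper does not prove this theorem; it is stated with a citation to Nikulin \cite{Ni2} and no argument is given in the text. So there is nothing in the paper itself to compare your proof against.

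Your approach via the holomorphic Lefschetz fixed point formula is the standard one and is essentially Nikulin's. Two points deserve tightening.

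First, your own argument that $\zeta=1$ forces $g=\mathrm{id}$ (linearization of a finite-order map at a fixed point) applies verbatim to every power $g^{d}$: if the local eigenvalue $\zeta_{p}$ has exact order $d<n$, then $d(g^{d})_{p}=\mathrm{id}$ and hence $g^{d}=\mathrm{id}$ globally, contradicting $\mathrm{ord}(g)=n$. Thus $\zeta_{p}$ is always a \emph{primitive} $n$-th root of unity and your parameters $m_{d}$ vanish for $d<n$. This collapses the composite cases considerably; for $n=6$ one has $|1-\zeta_{6}|^{2}=1$, so the single Lefschetz identity already gives $|\mathrm{Fix}(g)|=2$, not the $4$ you wrote. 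Likewise $n=8$ needs only the identity for $g$ itself, the irrationality of $\sqrt{2}$ forcing one fixed point of each primitive local type.

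Second, the exclusion of $p=11$ and $p=23$ is not completed by the two relations $a+(p-1)b=22$ and $a-b=|\mathrm{Fix}(g)|-2$ alone: these admit the non-negative solutions $(a,b)=(2,2)$ and $(0,1)$. What actually rules them out is that the coinvariant lattice $((H^{2})^{g^{\ast}})^{\perp}$, of rank $22-a$, is orthogonal to $\mathrm{Re}\,\omega_{X}$, $\mathrm{Im}\,\omega_{X}$ and to an invariant K\"ahler class, three mutually orthogonal classes of positive square; in signature $(3,19)$ this forces $22-a\leq 19$, i.e.\ $a\geq 3$. With these two corrections your plan goes through.
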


Mukai \cite{mukai} has determined all maximum finite symplectic automorphism groups (11 of them);
see also Kondo \cite{Ko-lat} for a lattice-theoretic proof.

\begin{thm}[\cite{mukai}]\label{Mu-sym}
Suppose that $G$ is a finite group of symplectic automorphisms of $K3$ surface. 
Then $G$ is a subgroup of one of the 11 maximum symplectic automorphism groups of $X$ below:

\begin{table}[htb]
\begin{center}
\begin{tabular}{|c|c|}
\hline
$G$ & order \\
\hline
$PSL_{2}(7)$ & 168 \\
\hline
$\mathfrak{A}_{6}$  & 360 \\
\hline
$\mathfrak{S}_{5}$  & 120 \\
\hline
$M_{20} = (\mathbb{Z}/2 \mathbb{Z})^{4} \rtimes \mathfrak{A}_{5}$  & 960 \\
\hline
$F_{384} = (\mathbb{Z}/2 \mathbb{Z})^{4} \rtimes \mathfrak{S}_{4}$ & 384 \\
\hline
$A_{4, 4} = (\mathfrak{S}_{4} \times \mathfrak{S}_{4}) \cap \mathfrak{A}_{8}$  & 288 \\
\hline
$T_{192} = (Q_8 * Q_8) \rtimes  \mathfrak{S}_{3}$ & 192 \\
\hline
$H_{192} = (\mathbb{Z}/2 \mathbb{Z})^{4} \rtimes D_{12}$ & 192 \\
\hline
$N_{72} = (\mathbb{Z}/3 \mathbb{Z})^{2} \rtimes D_{8}$  & 72 \\
\hline
$M_{9} = (\mathbb{Z}/3 \mathbb{Z})^{2} \rtimes Q_{8}$  & 72 \\
\hline
$T_{48} = Q_{8}\rtimes \mathfrak{S}_3$  & 48 \\
\hline
\end{tabular}
\caption[symplectic automorphism groups]{Symplectic automorphism groups}
\end{center}
\end{table}
\end{thm}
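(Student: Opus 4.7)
The plan is to study the representation of $G$ on the rational cohomology $H^{\ast}(X,\mathbb{Q})$ and reduce the geometric problem to one about finite subgroups of the Mathieu group $M_{24}$. First I would show that the trivial representation of $G$ appears in $H^{\ast}(X,\mathbb{Q})$ with multiplicity at least $5$. Indeed, $G$ acts trivially on $H^{0}(X,\mathbb{Q})$, $H^{4}(X,\mathbb{Q})$ and, because it is symplectic, also on $H^{2,0}(X)$ and $H^{0,2}(X)$; moreover averaging a K\"ahler class over $G$ produces a non-zero $G$-invariant class in $H^{1,1}(X)$, giving a fifth trivial summand.

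Next, for every non-trivial $g\in G$, Theorem \ref{Ni-fix} says that the fixed locus is a finite set whose cardinality is completely determined by the order of $g$. The topological Lefschetz fixed-point formula then computes
\[ |\mathrm{Fix}(g)|=\sum_{i}(-1)^{i}\operatorname{tr}(g^{\ast}|H^{i}(X,\mathbb{Q}))=2+\operatorname{tr}(g^{\ast}|H^{2}(X,\mathbb{Q})), \]
so the character of the whole cohomology representation at $g$ equals $|\mathrm{Fix}(g)|$. Substituting the values $8,6,4,4,2,3,2$ from Theorem \ref{Ni-fix} pins down this character on every cyclic subgroup of $G$.

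The key step, which is Mukai's arithmetic comparison, is to observe that this character coincides with the restriction of the $24$-dimensional permutation character of $M_{24}$ on its natural action on $24$ letters. An inner-product computation using this character, together with the multiplicity $\geq 5$ from the first step, forces $G$ to embed into the point stabilizer $M_{23}\subset M_{24}$ in such a way that its action on the $24$ letters has at least $5$ orbits (the orbit count being the multiplicity of the trivial summand). Once this is established, the classification becomes a finite group-theoretic task: enumerate the maximal subgroups of $M_{23}$ that have at least $5$ orbits on $24$ letters, and match them against the eleven entries in the table.

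The hard part is the realization step: showing that each of the eleven candidate groups really does occur as a group of symplectic automorphisms of some $K3$ surface, and conversely that no larger subgroup of $M_{23}$ with $\geq 5$ orbits produces such an action. Mukai handles the first half by explicit geometric constructions (quartics, complete intersections, elliptic models), while Kondo's later lattice-theoretic argument packages the existence side through the Torelli theorem applied to carefully chosen sublattices of a Niemeier lattice. Either way, pairing the cohomological necessary condition with a constructive sufficiency statement is where the real work lies.
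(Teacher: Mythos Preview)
The paper does not prove this theorem at all; it simply states it with the citation \cite{mukai} and moves on, as is appropriate for a survey. There is therefore nothing in the paper to compare your proposal against.

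That said, your outline is a faithful sketch of Mukai's original strategy: the multiplicity-five trivial summand, the Lefschetz trace computation using Nikulin's fixed-point counts, the identification of the resulting character with the permutation character of $M_{24}$ on $24$ letters, the reduction to subgroups of $M_{23}$ with at least five orbits, and finally the explicit geometric realizations. One caution: the passage from ``the character agrees with that of $M_{24}$'' to ``$G$ actually embeds in $M_{23}$'' is not a mere inner-product computation; in Mukai's paper this step requires a careful group-theoretic classification of abstract finite groups satisfying the numerical constraints (order dividing certain numbers, element orders at most $8$, specific Sylow structure), and the embedding into $M_{23}$ is then verified case by case. Kondo's later proof \cite{Ko-lat}, which you also mention, genuinely bypasses this by embedding the coinvariant lattice into a Niemeier lattice and using the known automorphism groups there; that is the cleaner route if you want a self-contained lattice argument.
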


First remarks of non-symplectic cases are the following two Propositions.

\begin{prp}[\cite{Ni2}]\label{non-symplectic-algebraic}
If a $K3$ surface $X$ has a finite non-symplectic automorphism $g$ then $X$ is algebraic.
\end{prp}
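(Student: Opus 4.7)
The plan is to exhibit an integral class of positive self-intersection inside $S_X$; once we have that, $X$ is projective by the usual Kodaira/Moishezon criterion applied to the Kähler surface $X$.

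First I would set up the eigen-decomposition. Since $g$ has finite order and is non-symplectic, the scalar $\alpha(g)$ from the homomorphism $\alpha:\langle g\rangle \to \mathbb{C}^{\times}$ is a nontrivial root of unity $\zeta \neq 1$. For any $x\in H^2(X,\mathbb{Z})$ with $g^{\ast}x=x$, the isometry property gives
\[ \langle x,\omega_X\rangle = \langle g^{\ast}x,g^{\ast}\omega_X\rangle = \zeta\,\langle x,\omega_X\rangle, \]
hence $\langle x,\omega_X\rangle = 0$ and similarly $\langle x,\bar{\omega}_X\rangle=0$. Thus the $g^{\ast}$-invariant sublattice
\[ L^{g} := \{x\in H^2(X,\mathbb{Z}) \mid g^{\ast}x = x\} \]
satisfies $L^{g}\otimes \mathbb{C} \subset H^{1,1}(X)$, so $L^{g}\subset S_X$.

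Next I would produce a positive class inside $L^{g}\otimes \mathbb{R}$. By Siu's theorem, every $K3$ surface is Kähler; fix a Kähler class $\kappa\in H^{1,1}(X)\cap H^{2}(X,\mathbb{R})$. Since $\langle g\rangle$ is finite, the average
\[ \tilde{\kappa} := \frac{1}{|\langle g\rangle|}\sum_{i=0}^{|\langle g\rangle|-1}(g^{i})^{\ast}\kappa \]
is again a Kähler class (Kähler classes form an open convex cone preserved by automorphisms) and is $g^{\ast}$-invariant by construction, so $\tilde{\kappa}\in L^{g}\otimes \mathbb{R}$. Because $\tilde{\kappa}$ is Kähler on a compact complex surface, $\tilde{\kappa}^{2}>0$.

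Finally I would descend from $\mathbb{R}$ to $\mathbb{Z}$ by a standard density argument: the set $\{v\in L^{g}\otimes \mathbb{R} \mid v^{2}>0\}$ is open, and $L^{g}\otimes \mathbb{Q}$ is dense in $L^{g}\otimes \mathbb{R}$, so we can find a nonzero rational, and after clearing denominators an integral, class $h\in L^{g}\subset S_X$ with $h^{2}>0$. Then $S_X$ contains a hyperbolic sublattice, and the Hodge index theorem forces the signature of $S_X$ to be $(1,\rho-1)$. By the projectivity criterion recalled in the preceding Proposition (a $K3$ surface is algebraic iff $S_X$ has signature $(1,\rho-1)$, equivalently $S_X$ contains a class of positive square), $X$ is algebraic.

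The only nontrivial input is the existence of a Kähler metric (Siu's theorem), which is what lets the averaging step produce a positive class in the invariant part; everything else — the orthogonality computation, averaging, and density of rational points — is formal. I do not see a substantive obstacle beyond invoking Siu.
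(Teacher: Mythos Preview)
Your argument is correct. The key steps---showing $L^{g}\subset S_{X}$ via the eigenvalue computation, averaging a K\"ahler class (which exists by Siu) to land in $L^{g}\otimes\mathbb{R}$, and then perturbing to a rational and hence integral class of positive square---are all sound, and the projectivity criterion you invoke is exactly the one the paper has just stated.

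The paper, however, takes a genuinely different route: it passes to the quotient $X/g$, observes that $H^{2}(X/g,\mathcal{O}_{X/g})=H^{2}(X,\mathcal{O}_{X})^{g}=0$ because $g$ acts by a nontrivial root of unity on the one-dimensional space $\mathbb{C}\omega_{X}$, and then appeals to the Enriques--Kodaira classification to conclude that $X/g$ is either Enriques or rational, hence projective; pulling back an ample class from $X/g$ then exhibits a positive class on $X$. Both arguments lean on a deep external theorem---yours on Siu, the paper's on the surface classification---but they produce the positive class in $S_{X}$ by different mechanisms. Your approach stays entirely within the lattice-theoretic setup already in place and avoids any discussion of quotient singularities; the paper's approach has the virtue of foreshadowing the later sections, where precisely these quotients (log del Pezzo and log Enriques surfaces) become the main objects of study.
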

\begin{proof}
We consider the quotient surface $X/g$. Since $g$ does not act trivially on 
$H^{2}(X, \mathcal{O}_{X})=H^{0}(X, \Omega_{X}^{2})=\mathbb{C}\langle \omega_{X} \rangle$,
we have $H^{2}(X/g, \mathcal{O}_{X/g})=0$.
By the classification of complex surfaces (see \cite[Chapter VI]{BHPV}), $X/g$ is Enriques or rational.
Since an Enriques surface and a rational surface are algebraic, we can pull back an ample class of $X/g$ to $X$.
\end{proof}

\begin{prp}[\cite{Ni2, Xiao, machida-oguiso}]
Suppose that $\mathbb{Z}/I\mathbb{Z}$ is a non-symplectic automorphism group of $X$.
Then $\Phi (I)\leq 21$ and $I\neq 60$, where $\Phi $ is the Euler function. 

Moreover, for each $I$ satisfying $\Phi (I)\leq 21$ and $I\neq 60$, 
there exists a $K3$ surface $X_{I}$ admitting a cyclic group action 
$\langle g \rangle$ with $\langle g \rangle\simeq \langle \alpha (g) \rangle=\mathbb{Z}/I\mathbb{Z}$.
\end{prp}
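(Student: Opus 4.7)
The plan splits into the necessity of $\Phi(I)\le 21$ and $I\ne 60$, and separately the existence of realizing K3 surfaces. For the necessity, I would begin by invoking Proposition \ref{non-symplectic-algebraic} to conclude that $X$ is algebraic, so the Picard number $\rho$ satisfies $\rho\ge 1$ and consequently $\rk T_{X}=22-\rho\le 21$. Let $g$ generate $\mathbb{Z}/I\mathbb{Z}$; since the kernel of $\alpha$ is trivial we have $g^{\ast}\omega_{X}=\zeta_{I}\omega_{X}$ for a primitive $I$-th root of unity $\zeta_{I}$. Because $g^{\ast}$ preserves the Hodge decomposition of $H^{2}(X,\mathbb{C})$, it preserves $S_{X}=H^{1,1}(X)\cap H^{2}(X,\mathbb{Z})$ and therefore also its orthogonal complement $T_{X}$. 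Since $\omega_{X}\in T_{X}\otimes\mathbb{C}$, $\zeta_{I}$ is an eigenvalue of $g^{\ast}|_{T_{X}}$, and its characteristic polynomial lies in $\mathbb{Z}[x]$. Hence the $I$-th cyclotomic polynomial $\Phi_{I}$, being the $\mathbb{Q}$-minimal polynomial of $\zeta_{I}$, divides this characteristic polynomial, giving $\Phi(I)=\deg\Phi_{I}\le \rk T_{X}\le 21$.

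Ruling out $I=60$ requires more than the lattice bound, since $\Phi(60)=16\le 21$. Here I would consider the powers $g^{30}$, $g^{20}$, $g^{12}$ of prime orders $2$, $3$, $5$; each is itself non-symplectic because $\alpha(g^{k})=\zeta_{60}^{k}$ is a non-trivial root of unity in those cases. Nikulin's analysis of non-symplectic automorphisms of prime order determines the possible fixed loci and the structure of the invariant and anti-invariant sublattices in $H^{2}(X,\mathbb{Z})$ for each of these powers. Combining this information with the holomorphic and topological Lefschetz fixed point formulas applied to $g$ itself, one obtains a system of integrality and trace constraints on the eigenvalue multiplicities of $g^{\ast}$ on $H^{2}(X,\mathbb{C})$, together with the requirement $\chi(X)=24$. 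A finite case analysis then shows that no consistent assignment exists; this is the arithmetic heart of Machida--Oguiso's argument.

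For the existence half, I would go case by case through the finite set of admissible $I$ and exhibit an explicit K3 surface together with a cyclic action of the prescribed order. Natural candidates are smooth quartics or double covers of $\mathbb{P}^{2}$ with diagonal symmetries, hypersurfaces in weighted projective spaces, Kummer-type surfaces, and elliptic K3 surfaces in Weierstrass form whose base or Mordell--Weil part carries the desired automorphism, as in the constructions of Kond\=o, Oguiso--Zhang, and Vorontsov. In each case the eigenvalue of $g^{\ast}$ on $\omega_{X}$ is read off from a Poincar\'e-residue computation, verifying that $\langle g\rangle\simeq \langle\alpha(g)\rangle$ is genuinely of order $I$.

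The principal obstacle is the exclusion of $I=60$: every other admissible order is either immediately killed by the transcendental-rank inequality or realized by a well-known construction, whereas $60$ slips through the naive bound $\Phi(I)\le 21$ and its elimination demands the combined fixed-point and Lefschetz analysis above.
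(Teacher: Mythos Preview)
The paper does not supply its own proof of this proposition; it is stated as a citation to \cite{Ni2}, \cite{Xiao}, and \cite{machida-oguiso}, so there is no in-paper argument to compare against. That said, your outline is faithful to what those references do. Your derivation of $\Phi(I)\le 21$ is the standard one and is essentially complete; note that the paper's Lemma~\ref{sayou}(1) records the sharper fact that every eigenvalue of $\sigma^{\ast}|_{T_X}$ is a primitive $I$-th root of unity, so in fact $\Phi(I)\mid \rk T_X$, which you could obtain by the same orthogonality trick applied to the kernel of $(g^{\ast})^{d}-1$ for each proper divisor $d\mid I$. One small imprecision: you write ``since the kernel of $\alpha$ is trivial'', but the bound on $I$ only concerns the image $\alpha(G)$, and your argument goes through for any $g$ with $\alpha(g)$ of exact order $I$, regardless of whether $g$ itself has a symplectic power.

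For the exclusion of $I=60$ and for the existence of examples, what you have written is an accurate high-level summary of Machida--Oguiso and of Kond\=o's constructions rather than a proof: the Lefschetz/fixed-locus case analysis for $60$ and the explicit elliptic or weighted-hypersurface models for each admissible $I$ are where the actual content lies, and you have (appropriately) deferred to the literature for those. If you intend this as a survey-level justification matching the paper's own treatment, it is adequate; if you intend a self-contained proof, the $I=60$ case and the existence table would each need to be written out in full.
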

\begin{table}[htb]
\begin{center}
\begin{tabular}{|c|cccccccccc|}
\hline
$\Phi(I)$ & 20 & 18 & 16 & 12 & 10 & 8 & 6 & 4 & 2 & 1 \\
\hline
    & 66 & 54 & 60 & 42 & 22 & 30 & 18 & 12 & 6 & 2 \\
    & 50 & 38 & 48 & 36 & 11 & 24 & 14 & 10 & 4 & 1 \\
$I$ & 44 & 27 & 40 & 28 &    & 20 &  9 &  8 & 3 & \\
    & 33 & 19 & 34 & 26 &    & 16 &  7 &  5 & & \\
    & 25 &    & 32 & 21 &    & 15 & & & & \\
    &    &    & 17 & 13 &    & & & & & \\
\hline
\end{tabular}
\caption[1]{$\Phi (I) \leq 21$}\label{listI}
\end{center}
\end{table}

The generator of $\mathbb{Z}/I\mathbb{Z}$ is a non-symplectic automorphism of  order $I$.
We call it a \textit{purely} non-symplectic automorphism, hence it satisfies
$g^{\ast}\omega_{X}=\zeta_{I}\omega_{X}$ where $\zeta_{I}$ is a primitive $I$-th root of unity.

\begin{exa}[{\cite[Example 3.2]{keum}}]
We consider the pair of the $K3$ surface and the  automorphism given by the following:
\[ X:y^{2}=x^{3}+t^{11}-t, \ 
g(x,y,t) = (\zeta _{60}^{2}x, \zeta _{60}^{3}y, \zeta _{60}^{6}t). \]
$g$ is non-symplectic and not purely.
Indeed it satisfies $g^{\ast}\omega_{X}=\zeta_{12}\omega_{X}$,
hence $g^{5}$ is a purely non-symplectic automorphism of order 12 and 
$g^{12}$ is a symplectic automorphism of order 5.
\end{exa}

In the exact sequence of (\ref{basic-es}), we have $|\kr \alpha|\leq |M_{20}|=960$ and $I \leq 66$.
The finite automorphism group of a $K3$ surface with largest order is determined by Kondo.

\begin{thm}[\cite{Ko-max}]
Let $G$ be a finite automorphism group of $X$.
Then $|G|\leq 3840$. 
If $|G|=3840$, then $G$ is isomorphic to an extension of $M_{20}$ by $\mathbb{Z}/4 \mathbb{Z}$.
And such pair $(X,G)$ is unique up to isomorphism.
Indeed, $X$ is a Kummer surface.
\end{thm}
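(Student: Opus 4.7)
The plan is to exploit the exact sequence $1 \to K \to G \to \mathbb{Z}/I\mathbb{Z} \to 1$ of (\ref{basic-es}), with $K := \kr\alpha$ the symplectic kernel, giving $|G| = |K|\cdot I$. Mukai's Theorem \ref{Mu-sym} bounds $|K| \leq 960$, and the preceding proposition gives $I \leq 66$; since $960 \cdot 66$ far exceeds $3840$, the crucial step is to derive a joint constraint linking $|K|$ and $I$ through the lattice structure of $H^{2}(X,\mathbb{Z})$.

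That constraint is lattice-theoretic. For any finite symplectic group $K$ the coinvariant lattice $S_{K} := (H^{2}(X,\mathbb{Z})^{K})^{\perp}$ is negative definite (see \cite{Ni2}), hence $T_{X} \subset H^{2}(X,\mathbb{Z})^{K}$ and $\mathbb{Z}/I\mathbb{Z}$ acts faithfully on this invariant sublattice. The generator acts on $\omega_{X} \in T_{X}\otimes\mathbb{C}$ by a primitive $I$-th root of unity, so the cyclotomic polynomial $\Phi_{I}$ divides its characteristic polynomial on $T_{X}$, yielding
\[ \varphi(I) \leq \rk T_{X} \leq 22 - \rk S_{K}. \]
The values of $\rk S_{K}$ for Mukai's eleven groups are known explicitly; in particular $\rk S_{M_{20}} = 19$, which gives $\varphi(I) \leq 3$ and hence $I \in \{1,2,3,4,6\}$. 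A case-by-case inspection then shows $|K|\cdot I \leq 3840$, with equality forced only by $K = M_{20}$ and $I = 4$. The delicate step is to exclude the borderline case $I = 6$ with $K = M_{20}$: one argues that the isometry group of the specific rank-$3$ lattice $H^{2}(X,\mathbb{Z})^{M_{20}}$ admits no order-$6$ element compatible with both the prescribed $M_{20}$-action and a Hodge structure supporting a K\"ahler class.

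In the equality case, $G$ is an extension of $M_{20}$ by $\mathbb{Z}/4\mathbb{Z}$. The rank-$3$ invariant lattice equipped with its $\mathbb{Z}/4\mathbb{Z}$-action and the resulting line $H^{2,0}(X)$ pin down the period of $X$; by the Global Torelli Theorem \ref{gtfk3}, the pair $(X,G)$ is thus unique up to isomorphism. Explicit construction on the Kummer surface of $E_{i}\times E_{i}$ (where $E_{i}$ is the elliptic curve of $j$-invariant $1728$) realizes this extension, so $X$ is a Kummer surface. The hard part throughout is the lattice bookkeeping required to eliminate near-miss cases such as $I = 6$ with $K = M_{20}$, and to match the maximal invariant lattice with that of the specific Kummer surface; once those are dispatched, existence and uniqueness follow cleanly from Torelli together with the explicit Kummer construction.
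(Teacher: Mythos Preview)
The paper does not prove this theorem; it is a survey, and the statement is simply quoted with the citation \cite{Ko-max} and no accompanying argument. There is therefore no ``paper's own proof'' to compare your proposal against.

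For what it is worth, your outline is broadly faithful to Kond\=o's actual strategy in \cite{Ko-max}: use the exact sequence (\ref{basic-es}), bound the symplectic kernel $K$ via Mukai's Theorem~\ref{Mu-sym}, and constrain $I$ through $\varphi(I)\mid \rk T_{X}\leq 22-\rk S_{K}$. The soft spot in your write-up is the sentence ``a case-by-case inspection then shows $|K|\cdot I\leq 3840$'': this is where essentially all the content lies, since one must range over \emph{all} possible symplectic kernels $K$ (not only the eleven maximal ones --- a smaller $K$ has smaller $\rk S_{K}$ and hence permits larger $I$), and the trade-off is not obviously dominated by $(M_{20},4)$ without doing the bookkeeping. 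Your treatment of the exclusion of $I=6$, the uniqueness via Torelli, and the identification of $X$ with $\mathrm{Km}(E_{i}\times E_{i})$ is correct in spirit but likewise only gestured at; Kond\=o's paper carries these out with explicit lattice computations.
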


Structure of finite non-symplectic automorphism groups is clear.
But a generator (a non-symplectic automorphism) of such a  group is not so.
Non-symplectic automorphisms have been studied by Nikulin who is a pioneer and several mathematicians.
In the following we treat purely non-symplectic automorphisms.

\subsection{Classification of non-symplectic automorphisms}

In this section, we collect some basic results 
for non-symplectic automorphisms on a $K3$ surface. 
For the details, see \cite{Ni2, Ni3, AST}, and so on.


\begin{lmm}\label{sayou}
Let $\sigma$ be a non-symplectic automorphism of order $I$ on a $K3$ surface $X$.
Then the followings are hold.
\begin{itemize}
\item[(1)] The eigen values of $\sigma^{\ast }\mid T_{X}$ are the 
primitive $I$-th roots of unity, hence
$\sigma^{\ast }\mid T_{X}\otimes \mathbb{C}$ can be diagonalized as:
\[ \begin{pmatrix} 
\zeta_{I} E_{q} & 0 & \cdots & \cdots & \cdots & 0 \\ 
\vdots &  & \ddots &  &  & \vdots \\ 
\vdots &  &  & \zeta_{I}^{n} E_{q} &  & \vdots \\ 
\vdots &  &  &  & \ddots  & 0 \\ 
0 & \cdots & \cdots & \cdots & 0 & \zeta_{I}^{I-1} E_{q} \\ 
\end{pmatrix}, \]
where $E_{q}$ is the identity matrix of size $q$ 
and $1\leq n\leq I-1$ is co-prime with $I$.

\item[(2)] Let $P^{i,j}$ be an isolated fixed point of $\sigma$ on $X$. 
Then $\sigma^{\ast }$ can be written as 
\[ \begin{pmatrix}  \zeta_{I} ^{i} & 0 \\ 0 & \zeta_{I} ^{j}  \end{pmatrix}  \hspace{5mm} (i+j \equiv 1 \mod I) \]
under some appropriate local coordinates around $P^{i,j}$.
\item[(3)] Let $C$ be an irreducible curve in $X^{\sigma}$ and $Q$ a point on $C$. 
Then $\sigma^{\ast }$ can be written as
\[ \begin{pmatrix}  1 & 0 \\ 0 & \zeta_{I}   \end{pmatrix} \] 
under some appropriate local coordinates around $Q$. 
In particular, fixed curves are non-singular.
\end{itemize}\end{lmm}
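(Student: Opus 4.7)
The plan is to handle (1) by a lattice-theoretic argument based on the minimality of $T_X$, and then to deduce the local descriptions (2) and (3) by holomorphic linearization of $\sigma$ at fixed points combined with the non-symplectic condition $\sigma^{*}\omega_X = \zeta_I\omega_X$.

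For (1), I would first note that $\sigma^{*}$ preserves $T_X$, since $T_X$ is defined intrinsically as the orthogonal complement of $S_X$, and that it has finite order $I$, hence is diagonalizable over $\mathbb{C}$ with eigenvalues among the $I$-th roots of unity. Grouping these by cyclotomic type gives a decomposition $T_X \otimes \mathbb{Q} = \bigoplus_{d \in D} M_d \otimes \mathbb{Q}$ into $\Phi_d$-isotypic components, where each $M_d := \ker\Phi_d(\sigma^{*}) \cap T_X$ is $\sigma^{*}$-invariant and primitive in $H^{2}(X,\mathbb{Z})$; moreover, eigenspaces for distinct cyclotomic types are orthogonal under the cup product, since for eigenvectors $v,w$ with eigenvalues $\zeta,\zeta'$ one has $\langle v,w\rangle=\zeta\zeta'\langle v,w\rangle$, forcing $\zeta\zeta'=1$. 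Since $\sigma^{*}\omega_X = \zeta_I\omega_X$, we have $\omega_X \in M_I \otimes \mathbb{C}$, so $M_I$ is a primitive sublattice of $H^{2}(X,\mathbb{Z})$ whose complexification contains $\omega_X$. The key step is the minimality characterization of $T_X$ as the smallest such sublattice: this forces $T_X \subseteq M_I$, whence $M_I = T_X$ and $M_d = 0$ for $d \neq I$. The characteristic polynomial of $\sigma^{*}|T_X$ is therefore $\Phi_I^{q}$ for some $q \geq 1$, and each primitive $I$-th root of unity appears as an eigenvalue with common multiplicity $q$, giving the stated block-diagonal form.

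For (2) and (3), I would appeal to the classical fact (a Bochner-type linearization via averaging) that a finite-order biholomorphism fixing a point admits local holomorphic coordinates in which it acts linearly, and the linear action is diagonalizable since its order divides $I$. At an isolated fixed point $P^{i,j}$, neither eigenvalue can be $1$, so the local matrix is $\mathrm{diag}(\zeta_I^{i},\zeta_I^{j})$ with $i,j \not\equiv 0 \pmod{I}$. The congruence $i+j \equiv 1 \pmod{I}$ follows by computing $\sigma^{*}$ on $du \wedge dv$: writing $\omega_X = h(u,v)\,du \wedge dv$ with $h(0,0)\neq 0$ near $P^{i,j}$ and comparing with $\sigma^{*}\omega_X = \zeta_I\omega_X$ forces $\zeta_I^{i+j} = \zeta_I$. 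For (3), the tangent direction to the fixed curve $C$ at $Q$ must be fixed, so one eigenvalue of the linearization is $1$; the other is then $\zeta_I$ by the same $\omega_X$-computation, giving the matrix $\mathrm{diag}(1,\zeta_I)$. Smoothness of $C$ near $Q$ is immediate, since in the diagonal coordinates the fixed locus is the coordinate axis $\{v=0\}$.

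The main obstacle lies in the lattice argument for (1), specifically in establishing the orthogonality and primitivity of the cyclotomic isotypic components cleanly enough that the minimality of $T_X$ can be invoked. Once this is in place, the local assertions (2) and (3) reduce to elementary calculations with the linearized action on the cotangent space and the pullback of the volume form.
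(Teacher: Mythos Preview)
The paper does not supply its own proof of this lemma; it is stated with references to \cite{Ni2, Ni3, AST} and used as input for what follows. Your argument is correct and is precisely the standard one found in those sources: for (1), the minimality characterization of $T_X$ as the smallest primitive sublattice whose complexification contains $\omega_X$, and for (2)--(3), Cartan's linearization of a finite-order automorphism at a fixed point together with the constraint $\sigma^{*}\omega_X=\zeta_I\omega_X$ on the local $2$-form.

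One small simplification: the orthogonality of the cyclotomic components $M_d$, which you flag as the main obstacle, is not actually needed for the minimality step. It is enough that $M_I:=\ker\Phi_I(\sigma^{*})\cap T_X$ is primitive in $H^{2}(X,\mathbb{Z})$ (it is the kernel of an integral endomorphism of $T_X$, hence saturated in $T_X$, which is itself primitive) and that $\omega_X\in M_I\otimes\mathbb{C}$; minimality of $T_X$ then forces $T_X\subseteq M_I$, so $M_I=T_X$ and the characteristic polynomial of $\sigma^{*}|T_X$ is a power of $\Phi_I$.
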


Lemma \ref{sayou} (1) implies that $\Phi (I)$ divides $\rk T_{X}$, where $\Phi$ is the Euler function.
Lemma \ref{sayou} (2) and (3) imply that 
the fixed locus of $\sigma$ is either empty or the disjoint union of non-singular curves $C_{l}$ and isolated points $P_{k}^{i_{k},j_{k}}$:
\[ X^{\sigma}=\{ P_{1}^{i_{1}, j_{1}}, \dots , P_{M}^{i_{M}, j_{M}} \} \amalg C_{1} \amalg \dots \amalg C_{N}. \]

The global Torelli Theorem gives the following.
\begin{rem}\label{Torelli}
Let $X$ be a $K3$ surface and $g_{i}$ ($i=1$, $2$) automorphisms of $X$
such that $g_{1}^{\ast}|S_{X}=g_{2}^{\ast}|S_{X}$ and that
$g_{1}^{\ast}\omega _{X}=g_{2}^{\ast}\omega _{X}$.
Then $g_{1}=g_{2}$ in $\Aut (X)$.
\end{rem}
The Remark says that for study of non-symplectic automorphisms, 
the action on $S_{X}$ is important. Hence the invariant lattice 
$S_{X}^{\sigma}:=\{x\in S_{X}| \sigma^{\ast} (x) =x\}$ plays an essential role
for the classification of non-symplectic automorphisms.

\begin{thm}[\cite{Ni3}]\label{ord2-f}
Let $\sigma$ be a non-symplectic involution. 
Then $S_{X}^{\sigma}$ is a 2-elementary lattice
\footnote{See also \cite{Ni1} and \cite{R-S} for $p$-elementary lattices.}, 
hence, $\Hom (S_{X}^{\sigma}, \mathbb{Z})/S_{X}^{\sigma}=(\mathbb{Z}/2\mathbb{Z})^{\oplus a}$.
And the fixed locus of $\sigma$ is of the form 
\begin{equation*}
X^{\sigma}=
\begin{cases}
\phi  & \text{$S_{X}^{\sigma}=U(2)\oplus E_{8}(2)$}, \\
C^{(1)}\amalg C^{(1)} & \text{$S_{X}^{\sigma}=U\oplus E_{8}(2)$}, \\
C^{(g)} \amalg \mathbb{P}^{1} \amalg \dots \mathbb{P}^{1} & \text{otherwise},
\end{cases}
\end{equation*}
where $C^{(g)}$ is a genus $g$ curve with $g=(22-\rk S_{X}^{\sigma}-a)/2$.
Moreover the number of $\mathbb{P}^{1}$ is given by $(\rk S_{X}^{\sigma}-a)/2$.
\end{thm}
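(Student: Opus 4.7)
The plan is to split the theorem into a lattice-theoretic part (that $S_X^\sigma$ is $2$-elementary) and a geometric part (classifying $X^\sigma$), linked together by the Lefschetz fixed-point formulas.

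I would begin by identifying $S_X^\sigma$ with the full invariant sublattice of $\sigma^\ast$ on $L := H^2(X,\mathbb{Z})$. Because $\sigma$ is non-symplectic of order two, $\sigma^\ast \omega_X = -\omega_X$, and by Lemma \ref{sayou}(1) the eigenvalues of $\sigma^\ast$ on $T_X$ are primitive second roots of unity, i.e.\ all equal to $-1$. Hence $\sigma^\ast = -\mathrm{id}$ on $T_X$, which forces every $\sigma^\ast$-fixed class in $L$ to lie in $S_X = T_X^{\perp}$, so $S_X^\sigma = L^\sigma =: L^+$. To conclude that $L^+$ is $2$-elementary, I would use the standard trick that on the unimodular lattice $L$ the assignment $x \mapsto \tfrac12(x + \sigma^\ast x)$ defines a surjection $L \to (L^+)^\ast/L^+$: it is well-defined because $\langle x + \sigma^\ast x, z \rangle = 2\langle x, z\rangle$ for $z \in L^+$, and surjective by the unimodularity of $L$. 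Since twice the image of any $x$ lies in $L^+$, we obtain $(L^+)^\ast/L^+ \cong (\mathbb{Z}/2\mathbb{Z})^{a}$ for some $a$.

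Next I would pin down the local shape of the fixed locus. By Lemma \ref{sayou}(2)--(3), every fixed point admits local coordinates in which $\sigma^\ast$ is diagonal with eigenvalues that are square roots of unity; the alternative $(-1,-1)$ would give $\sigma^\ast(dx \wedge dy) = dx \wedge dy$, contradicting $\sigma^\ast \omega_X = -\omega_X$. Hence only the type $(1,-1)$ survives, every fixed point lies on a smooth fixed curve, and $X^\sigma$ is a disjoint union of smooth curves (possibly empty). Writing $r := \rk S_X^\sigma$, the topological Lefschetz fixed-point formula then gives
\begin{equation*}
\chi(X^\sigma) \;=\; 2 + \mathrm{tr}\bigl(\sigma^\ast \mid H^2(X,\mathbb{Q})\bigr) \;=\; 2r - 20,
\end{equation*}
which, if $X^\sigma = C^{(g)} \amalg \mathbb{P}^1 \amalg \cdots \amalg \mathbb{P}^1$ with $k$ rational components, translates into the single numerical relation $k - g = r - 11$.

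For a second, independent, relation I would bring in Smith theory for the $\mathbb{Z}/2\mathbb{Z}$-action on $H^\ast(X,\mathbb{F}_2)$: the total $\mathbb{F}_2$-Betti number of $X^\sigma$ is controlled by the $\sigma^\ast$-module structure on $H^\ast(X,\mathbb{F}_2)$, and a direct reading of this structure through the first paragraph identifies the number of connected components of $X^\sigma$ with a quantity expressible in terms of the discriminant invariant $a$. Combined with the Euler characteristic identity this yields the stated formulas $k = (r-a)/2$ and $g = (22-r-a)/2$. The trichotomy in the statement then corresponds to the extremal values: the case $(r,a)=(10,10)$ is uniquely realised by $U(2)\oplus E_8(2)$, gives $\chi(X^\sigma)=0$, and the Smith bound forces $X^\sigma = \emptyset$; the case $U\oplus E_8(2)$ with $(r,a)=(10,8)$ is the unique one producing two disjoint elliptic curves; and all other $2$-elementary hyperbolic $S_X^\sigma$ fall into the generic shape. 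The main obstacle is the Smith-theoretic step, where the purely algebraic invariant $a$ of the discriminant form must be matched against a topological count on $X^\sigma$: this is the heart of Nikulin's argument, and the only point at which an $\mathbb{F}_2$-cohomological input replaces the easier rational Lefschetz count.
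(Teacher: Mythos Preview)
The paper does not supply its own proof here; Theorem~\ref{ord2-f} is quoted from Nikulin \cite{Ni3} as a black box, so there is nothing to compare your argument to line by line. That said, your outline up through the topological Lefschetz count is standard and correct, and the Smith-theoretic route to the second relation $g+k = 11-a$ can indeed be made to work: one shows that $H^{2}(X,\mathbb{F}_{2})$, as an $\mathbb{F}_{2}[\mathbb{Z}/2\mathbb{Z}]$-module, has exactly $a$ free summands (this is precisely your computation of $(1+\sigma^{\ast})L = 2(L^{+})^{\ast}$, which you set up but do not exploit), and then the Smith inequality becomes an equality giving $\dim_{\mathbb{F}_{2}}H^{\ast}(X^{\sigma};\mathbb{F}_{2}) = 24 - 2a$. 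You should either carry this out or replace it by the more classical route through the quotient surface $X/\sigma$ and its lattice.

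There is, however, a genuine gap in your treatment of the two exceptional cases, and it is not cosmetic. Your claim that ``$(r,a)=(10,10)$ is uniquely realised by $U(2)\oplus E_{8}(2)$'' is false: the odd lattice $\langle 2\rangle \oplus \langle -2\rangle^{\oplus 9}$ scaled appropriately, or more to the point the even $2$-elementary lattice with invariants $(r,a,\delta)=(10,10,1)$, also has $(r,a)=(10,10)$, and for that lattice the fixed locus is a single elliptic curve, not empty. Likewise $(r,a)=(10,8)$ is realised both by $U\oplus E_{8}(2)$ (with $\delta=0$, giving $C^{(1)}\amalg C^{(1)}$) and by a lattice with $\delta=1$ (giving the generic shape $C^{(2)}\amalg \mathbb{P}^{1}$). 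The numerics you extract from Lefschetz and Smith are identical in each pair --- for instance, at $(r,a)=(10,10)$ both $\emptyset$ and a single elliptic curve have $\chi = 0$ and total $\mathbb{F}_{2}$-Betti number $4$ --- so your argument cannot possibly distinguish them. Nikulin's trichotomy hinges on the third invariant $\delta\in\{0,1\}$ of the discriminant form, together with a separate argument (via the Enriques quotient when $X^{\sigma}=\emptyset$, and via the elliptic pencil when $X^{\sigma}$ has two genus-one components) that pins down which $\delta$ occurs in each geometric case. Without introducing $\delta$ and this extra geometric input, the exceptional clauses of the theorem remain unproved.
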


\begin{exa}
Let $C$ be a smooth sextic curve in $\mathbb{P}^{2}$ and 
$X \to \mathbb{P}^{2}$ the double cover branched along $C$.
Then $X$ is a $K3$ surfaces and the covering transformation induces 
a non-symplectic involution $\iota$.

Since the N\'{e}ron-Severi lattice $S_{X}$ consists of the pull-back of a hyperplane in $\mathbb{P}^{2}$, 
$S_{X}^{\sigma}=S_{X}=A_{1}(=\langle -2 \rangle)$ with $(\rk S_{X}, a)=(1,1)$.
The fixed locus of $\iota$ is a genus 10 curve coming from $C$.
Indeed we have $10=(22-1-1)/2$ and $0=(1-1)/2$.
\end{exa}

As the same as these, fixed loci of non-symplectic automorphisms of order $p^{k}$
characterized in terms of the invariants of $p$-elementary lattices.

\begin{thm}[{\cite{AS, Taki1}}]\label{ord3-f}
Let $\sigma$ be a non-symplectic automorphism of order 3.
Then $S_{X}^{\sigma}$ is a 3-elementary lattice
hence, $\Hom (S_{X}^{\sigma}, \mathbb{Z})/S_{X}^{\sigma}=(\mathbb{Z}/3\mathbb{Z})^{\oplus a}$.
And the fixed locus of $\sigma$ is of the form 
\[ X^{\sigma}=
C^{(g)} \amalg \mathbb{P}^{1} \amalg \dots \mathbb{P}^{1} \amalg\{P_{1},\dots , P_{n}\}
\]
where $C^{(g)}$ is a genus $g$ curve with $g=(22-\rk S_{X}^{\sigma}-2a)/4$ and 
$P_{i}$ are isolated points.
Moreover the number of $\mathbb{P}^{1}$ is given by $(2+\rk S_{X}^{\sigma}-2a)/4$ and
$n=(\rk S_{X}^{\sigma}-2)/2$.
In the case $(\rk S_{X}^{\sigma}, a)= (8,7)$ for which $(g, \sharp \mathbb{P}^{1})=(0, -1)$, 
this means a fixed locus consisting of 3 isolated points and no curve component.
\end{thm}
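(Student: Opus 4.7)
The argument splits into two parts: establishing $3$-elementarity of $S_X^\sigma$, and computing the numerical invariants of $X^\sigma$.

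For the lattice-theoretic part I would follow the template from the involution case (Theorem \ref{ord2-f}). Let $M := (S_X^\sigma)^\perp$ in $H^2(X,\mathbb{Z})$. By Lemma \ref{sayou}(1), $\sigma^*$ acts on $M$ with no nonzero fixed vectors and minimal polynomial $1+t+t^2$, making $M$ into a $\mathbb{Z}[\zeta_3]$-module. Unimodularity of $H^2(X,\mathbb{Z})$ yields a canonical anti-isometry $q_{S_X^\sigma} \simeq -q_M$ of discriminant forms, so it suffices to show $A_M$ is $3$-elementary. On $A_M$, $\sigma^*$ acts as the image of $\zeta_3$, and since $\mathrm{Nm}_{\mathbb{Q}(\zeta_3)/\mathbb{Q}}(1-\zeta_3) = 3$, the element $3 = (1-\zeta_3)(1-\zeta_3^2)$ annihilates $A_M$. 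This proves the first assertion, with $a$ the $\mathbb{F}_3$-length of the discriminant.

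For the fixed-locus invariants, Lemma \ref{sayou}(2),(3) first pins down local models: the congruence $i+j\equiv 1\pmod 3$ with $i,j\in\{1,2\}$ admits only $(i,j)=(2,2)$, so every isolated fixed point has linearization $\zeta_3^2\cdot\mathrm{Id}$, while fixed curves are smooth with normal-bundle action by $\zeta_3$. The topological Lefschetz formula then gives
\[ \chi(X^\sigma) = 2 + \mathrm{tr}(\sigma^*\mid H^2(X,\mathbb{R})) = 2 + r - (22-r)/2 = (3r-18)/2, \]
since the $(22-r)$-dimensional orthogonal complement to $S_X^\sigma\otimes\mathbb{R}$ splits into two primitive cube-root eigenspaces with total trace $-(22-r)/2$. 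Setting $\chi(X^\sigma) = (2-2g) + 2N + n$ yields one linear relation. The holomorphic Lefschetz (Atiyah--Bott) formula has left-hand side $1+\overline{\zeta}_3 = -\zeta_3$; plugging in the local models together with $\deg N_{C_l} = 2g_l - 2$ (by adjunction on a $K$-trivial surface) and cancelling cyclotomic denominators simplifies it to $g - N = 4 - n$. Combining the two relations forces $n = (r-2)/2$ and $g - N = (10-r)/2$.

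The remaining identity $g + N = 6 - a$, which separates $g$ from $N$, is where the $3$-elementary invariant $a$ enters. I would obtain it by descending to the resolved quotient $\widetilde{Y} \to X/\sigma$: each isolated fixed point produces a $\tfrac{1}{3}(1,1)$-singularity on $X/\sigma$ resolved by a single $(-3)$-curve, so $\rk \mathrm{Pic}(\widetilde{Y}) = r + n$. By Proposition \ref{non-symplectic-algebraic} together with $\mathrm{ord}(\sigma)>2$, the surface $\widetilde{Y}$ is smooth rational. Comparing the discriminant form of $S_X^\sigma$ with the Picard lattice of $\widetilde{Y}$ --- the fixed curves descending isomorphically, the exceptional $(-3)$-curves contributing additional classes, and the $\mathbb{F}_3$-length of the discriminant measuring the defect --- yields $g + N = 6 - a$. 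The degenerate case $(r,a)=(8,7)$, for which the formula gives $(g,N)=(0,-1)$, must be interpreted geometrically as the absence of any fixed curve component. The main obstacle in this plan is precisely this final step: the two Lefschetz formulas are routine once the local models are fixed, but matching $a$ with $6-(g+N)$ requires a careful comparison of overlattices and discriminant forms between $S_X^\sigma$ and $\mathrm{Pic}(\widetilde{Y})$, together with the parallel structural fact --- also proved via the quotient geometry --- that at most one fixed curve may have positive genus.
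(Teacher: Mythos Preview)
The paper is a survey and does not prove this theorem; it simply cites \cite{AS, Taki1}. Your outline tracks the strategy of those references: the two Lefschetz formulas pin down $n=(r-2)/2$ and $g-N=(10-r)/2$, and the remaining relation $g+N=6-a$ is read off from the rational quotient $\widetilde{Y}$.

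One genuine gap: your $3$-elementarity argument does not work as written. Saying that $\sigma^*$ acts on $A_M$ ``as the image of $\zeta_3$'' and then invoking $\mathrm{Nm}(1-\zeta_3)=3$ does not force $3A_M=0$; a $\mathbb{Z}[\zeta_3]$-module need not be $3$-torsion. The missing step is that the anti-isometry $A_{S_X^\sigma}\simeq A_M$ is $\sigma^*$-equivariant, and $\sigma^*$ is visibly the identity on $A_{S_X^\sigma}$, hence also on $A_M$. Combined with $1+\sigma^*+(\sigma^*)^2=0$ on $M$ (and therefore on $A_M$), this gives $3=1+1+1=0$ on $A_M$. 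Your norm identity is true but irrelevant.

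On the final step you correctly flag the difficulty, but ``comparing discriminant forms between $S_X^\sigma$ and $\mathrm{Pic}(\widetilde{Y})$'' is not yet an argument. In \cite{AS,Taki1} the relation $g+N=6-a$ comes from an honest numerical computation on $\widetilde{Y}$: one expresses $K_{\widetilde{Y}}$ and $\chi_{\mathrm{top}}(\widetilde{Y})$ in terms of the branch data $(g,N,n)$, applies Noether's formula (using rationality of $\widetilde{Y}$), and identifies $a$ via the index of $\pi^*\mathrm{Pic}(\widetilde{Y})$ inside $S_X^\sigma$. The fact that at most one fixed curve has positive genus is handled separately, typically by the Hodge index theorem applied to two disjoint curves of non-negative self-intersection in $S_X$. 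Your plan is the right one, but those computations must actually be carried out.
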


We do not have the complete classification of non-symplectic automorphisms.
See \cite{AlST, AlS, ord4, AST, 13-19, Schutt, Taki2, Taki3, Taki4} 
for cases of pime-power order
and \cite{Kondo1, machida-oguiso, Xiao, Br} for cases of non-pime-power order.

\begin{prob}
Classify non-symplectic automorphisms of order 4 and 9 under generic conditions.
\end{prob}
It seems that the Problem is difficult when the quotient surface of a $K3$ surface by
a non-symplectic automorphism is a log Enriques surface.

\begin{rem}
Moduli spaces of $K3$ surfaces with a non-symplectic automorphism 
have also been studied. For example, see \cite{ma, mot}.
\end{rem}

\section{Log rational surfaces}\label{log}

Let $Z$ be a normal algebraic surface with at worst log terminal singularities (i.e., quotient singularities).
$Z$ is called a \textit{log del Pezzo} if the anticanonical divisor $-K_Z$ is ample.
$Z$ is called \textit{log Enriques} if the irregularity $\dim H^{1}(Z, \mathcal{O}_{Z})=0$
and a positive multiple $IK_{Z}$ of a canonical Weil divisor $K_{Z}$ is linearly equivalent to zero.
These surfaces constitute one of the most interesting classes of rational surfaces;
they naturally appear in the outputs of the (log) minimal model program and their classification 
is an interesting problem. 
The \textit{index} $I$ of $Z$ is the least positive integer such that $I K_Z$ is a Cartier divisor.

\subsection{log del Pezzo surfaces}
See also \cite{AN, Nakayama, OT} for details.

Log del Pezzo surfaces with index $I=1$ are sometimes called Gorenstein del Pezzo surfaces and
their classification is a classical topic. 
In the index $I=2$ Alexeev and Nikulin \cite{AN} (over $\mathbb{C}$)
and Nakayama \cite{Nakayama} (char. $p\geq 0$ and also for log pairs) 
gave complete classifications, whose methods are independent in nature. 
Ohashi and Taki \cite{OT}  discuss a generalization of 
the ideas of \cite{AN} to treat log del Pezzo surfaces of index three.
We review the classification of log del Pezzo surfaces of index 2.

\begin{thm}[\cite{AN}]
Let $Z$ be a log del Pezzo surface of index $\leq 2$.
The followings hold:
\begin{itemize}
\item[(1)] There exists a branched covering $X\to Z$ such that 
$X$ is a $K3$ surface with a non-symplectic automorphism of order 2.
Moreover the automorphism fixes a non-singular curve with genus $\geq 2$.
\item[(2)] Let $\iota$ be a non-symplectic automorphism of order 2 on a $K3$ surface $X$.
If the fixed locus of $\iota$ contains a non-singular curve with genus $\geq 2$ then
we have a log del Pezzo surface of index 2 by contracting some curves on $X/\iota$.
\item[(3)] We can study log del Pezzo surfaces which are constructed in (2)
by using techniques of $K3$ surfaces.
\end{itemize}
\end{thm}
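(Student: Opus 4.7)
The plan is to build a bridge between log del Pezzo surfaces of index $\leq 2$ and $K3$ surfaces carrying a non-symplectic involution, and then to exploit Nikulin's classification of $2$-elementary lattices via Theorem \ref{ord2-f}.

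For part (1), I would take a sufficiently general smooth divisor $B\in |-2K_{Z}|$ (existence uses that $2K_{Z}$ is Cartier by the index hypothesis and that $-K_{Z}$ is ample) and form the associated double cover $\pi : Y\to Z$ branched along $B$. Riemann-Hurwitz gives $K_{Y}=\pi^{\ast }(K_{Z}+\frac{1}{2}B)=0$, and since $Z$ has only log terminal singularities and $B$ is smooth and Cartier, $Y$ has at worst Du Val singularities; the minimal resolution $X\to Y$ is therefore crepant, so $K_{X}\sim 0$. The splitting $\pi _{\ast }\mathcal{O}_{Y}=\mathcal{O}_{Z}\oplus \mathcal{O}_{Z}(K_{Z})$ combined with the rationality of $Z$ and Kawamata-Viehweg vanishing gives $h^{1}(\mathcal{O}_{X})=0$, so $X$ is a $K3$. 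The deck involution of $\pi$ lifts to an involution $\iota$ on $X$ which acts by $-1$ on the summand $\mathcal{O}_{Z}(K_{Z})$, hence by $-1$ on $\omega _{X}$, so $\iota$ is non-symplectic.

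By Theorem \ref{ord2-f}, the fixed locus of $\iota$ is empty, two elliptic curves, or $C^{(g)}\amalg \mathbb{P}^{1}\amalg \cdots \amalg \mathbb{P}^{1}$. The first two cases produce $X/\iota$ of Enriques or elliptic ruled type, incompatible with $Z$ being rational with ample $-K_{Z}$; so $X^{\iota }$ is of the third type and $C^{(g)}$ is identified with the preimage of $B$. The adjunction computation $2g-2=B\cdot (K_{Z}+B)=2K_{Z}^{2}$ (using log terminal adjunction on $Z$) combined with $K_{Z}^{2}>0$ then gives $g\geq 2$. Isolating the precise contribution of the singular points of $Z$ to the rational components of $X^{\iota }$ and verifying that these contract compatibly is where I expect the main technical obstacle, since one must match the local structure of each log terminal singularity with a specific ADE-chain configuration in $X$.

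For part (2) I reverse the construction: given $(X,\iota )$ with $X^{\iota }$ containing a curve $C^{(g)}$ of genus $\geq 2$, the quotient $W:=X/\iota$ is smooth by Lemma \ref{sayou} (3), and the images of the fixed $\mathbb{P}^{1}$'s are smooth $(-4)$-curves in $W$ that, together with neighbouring $(-2)$-chains, form configurations contractible to log terminal singularities; contracting gives the normal surface $Z$. Rationality of $W$, hence of $Z$, follows from $p_{g}(W)=q(W)=0$ via Castelnuovo, and ampleness of $-K_{Z}$ I would verify by pulling back through $X\to W\to Z$: positivity of $(-K_{Z})\cdot D$ for every irreducible $D$ on $Z$ reduces to an intersection computation on $X$ that uses the hyperbolicity of $S_{X}^{\iota }$ and the genus-$\geq 2$ hypothesis, which is precisely what forces $K_{Z}^{2}>0$. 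Part (3) is then immediate from parts (1) and (2): these set up a correspondence between log del Pezzo surfaces of index $2$ and $K3$ surfaces with a non-symplectic involution of the genus-$\geq 2$ type, under which the singular configuration and N\'{e}ron-Severi data of $Z$ are encoded in the invariants $(r, a, \delta )$ of the $2$-elementary lattice $S_{X}^{\iota }$. Combined with Nikulin's classification of such lattices and Theorem \ref{ord2-f}, this yields both the enumeration and the lattice-theoretic study of all such $Z$.
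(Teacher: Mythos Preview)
The paper does not prove this theorem; it merely states it with a citation to \cite{AN} and immediately proceeds to the index-three analogue. There is therefore nothing in the paper to compare your argument against.

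On its own merits, your sketch follows the standard Alexeev--Nikulin strategy and is essentially sound. One point deserves emphasis, however: in part (1) you write ``take a sufficiently general smooth divisor $B\in |-2K_{Z}|$'', but the existence of a smooth member of $|-2K_{Z}|$ disjoint from the singular locus of $Z$ is not a formal consequence of ampleness on a singular surface --- this is precisely the \emph{Smooth Divisor Theorem} of \cite{AN}, and it is the main nontrivial input on the del Pezzo side. The paper itself underscores this by contrast: for index three the analogous property can fail, which is why the Multiple Smooth Divisor Property appears as an explicit hypothesis in the next theorem and why $\mathbb{P}(1,1,3)$ is exhibited as a counterexample. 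Once that existence result is granted, your Riemann--Hurwitz and vanishing arguments, the identification of $\iota$ as non-symplectic, and the adjunction computation $g=K_{Z}^{2}+1\geq 2$ all go through. Your treatment of part (2) --- contracting the images of the fixed $\mathbb{P}^{1}$'s on $X/\iota$ and verifying ampleness of $-K_{Z}$ by intersection computations pulled back to $X$ --- is likewise along the lines of \cite{AN}, where this step is carried out lattice-theoretically.
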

By the theorem and Theorem \ref{ord2-f}, we can study all log del Pezzo surfaces of index $\leq 2$.
The following is the case of index 3.

\begin{thm}[\cite{OT}]
Let $Z$ be a log del Pezzo surface of index 3.
Assume that the linear system $|-3K_{Z}|$ contains a divisor $2C$ 
where $C$ is a smooth curve which does not intersect the singularities.
(We call the assumption the \textit{Multiple Smooth Divisor Property}.)
The followings hold:
\begin{itemize}
\item[(1)] There exists a branched covering $X\to Z$ such that 
$X$ is a $K3$ surface with a non-symplectic automorphism of order 3.
Moreover the automorphism fixes a non-singular curve with genus $\geq 2$.
\item[(2)] Let $\sigma$ be a non-symplectic automorphism of order 3 on a $K3$ surface $X$.
If the fixed locus of $\sigma$ contains a non-singular curve with genus $\geq 2$ then
we have a log del Pezzo surface of index 3 by contracting some curves on $X/\sigma$.
\item[(3)] We can study log del Pezzo surfaces which are constructed in (2)
by using techniques of $K3$ surfaces.
\end{itemize}
\end{thm}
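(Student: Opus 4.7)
The plan is to parallel the index-$2$ construction of Alexeev--Nikulin \cite{AN}, using the Multiple Smooth Divisor Property (MSDP) to promote the double-cover construction to a cyclic triple cover.

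For (1), I would let $L=\mathcal{O}_{Z}(-K_{Z})$ denote the reflexive rank-one sheaf and $s\in H^{0}(Z,\mathcal{O}_{Z}(2C))=H^{0}(Z,L^{[3]})$ the section cutting out $2C$. The pair $(L,s)$ produces a cyclic triple cover $\pi_{0}\colon X_{0}=\mathrm{Spec}_{Z}(\mathcal{O}_{Z}\oplus L^{-1}\oplus L^{-2})\to Z$. Because the branch $2C$ is non-reduced, $X_{0}$ acquires cuspidal transverse singularities $t^{3}=u^{2}$ along the reduced preimage $\tilde C=\pi_{0}^{-1}(C)_{\mathrm{red}}$, whose normalization $X_{0}^{\nu}$ is smooth there and totally ramified of degree three along $\tilde C$. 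Since $C\cap \mathrm{Sing}(Z)=\emptyset$ by MSDP, the section $s$ is non-vanishing on $\mathrm{Sing}(Z)$, and standard analysis of canonical-type covers of log-terminal points gives at worst Du~Val singularities on $X_{0}^{\nu}$ over $\mathrm{Sing}(Z)$. Let $X$ denote the minimal resolution of $X_{0}^{\nu}$.

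Next I verify that $X$ is a $K3$ surface with the required structure. Riemann--Hurwitz on $X_{0}^{\nu}$ gives $K=\pi^{*}K_{Z}+2\tilde C$, while total ramification along $\tilde C$ forces $\pi^{*}(-K_{Z})=2\tilde C$, so the canonical class is already trivial; the subsequent crepant Du~Val resolution preserves this, giving $K_{X}=0$. For the vanishing $h^{1}(\mathcal{O}_{X})=0$, I would use the decomposition $\pi_{*}\mathcal{O}_{X}=\mathcal{O}_{Z}\oplus \mathcal{O}_{Z}(K_{Z})\oplus \mathcal{O}_{Z}(2K_{Z})$ together with Kawamata--Viehweg vanishing applied to the ample $\mathbb{Q}$-Cartier divisor $-K_{Z}$, combined with Serre duality on the Cohen--Macaulay surface $Z$. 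The deck transformation $\sigma$ acts on $\pi_{*}\mathcal{O}_{X}$ by the character $\zeta_{3}^{i}$ on the $i$-th summand, and hence by $\zeta_{3}$ on $\omega_{X}$ via Grothendieck duality, so $\sigma$ is non-symplectic of order three. Its fixed curve $\tilde C\cong C$ has genus determined by adjunction on $Z$: from $2C\sim -3K_{Z}$ one reads $2g-2=(K_{Z}+C)\cdot C=\tfrac{3}{4}K_{Z}^{2}$, and $g\geq 2$ follows from the constraints the MSDP imposes on index-$3$ log del Pezzo surfaces.

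For (2), given $(X,\sigma)$ as in the hypothesis, I would set $Y=X/\sigma$. By Lemma~\ref{sayou}, along $\tilde C$ the action is the reflection $\mathrm{diag}(1,\zeta_{3})$, so $Y$ is smooth near the image $C$ of $\tilde C$, while each isolated fixed point (with local action $\mathrm{diag}(\zeta_{3}^{2},\zeta_{3}^{2})$, since $2+2\equiv 1\pmod 3$) produces a log-terminal cyclic quotient singularity of type $\frac{1}{3}(1,1)$. Applying Riemann--Hurwitz to $\pi\colon X\to Y$ yields $2C+2\sum P_{i}\in |-3K_{Y}|$, where the $P_{i}$ are the images of the fixed rational curves of $\sigma$. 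To extract a log del Pezzo of index $3$ satisfying MSDP, I would run the $K$-negative log-MMP on $Y$, contracting those $\sigma$-invariant $\mathbb{P}^{1}$-configurations whose classes lie in $\tilde C^{\perp}\subset S_{X}^{\sigma}$ so as to eliminate the $P_{i}$ while preserving the singularity structure; the resulting normal projective surface $Z$ has $-K_{Z}$ ample because its pull-back is a positive multiple of $\tilde C$ meeting every non-contracted curve, and $3K_{Z}$ is exactly Cartier because the construction is compatible with the degree-$3$ cover $X\to Z$.

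The main obstacle will be step (2): identifying precisely which invariant curves to contract so that $-K_{Z}$ becomes strictly ample and the resulting singularities have index exactly three (rather than some proper divisor of $6$) requires a delicate numerical analysis on the $3$-elementary invariant lattice $S_{X}^{\sigma}$, fully analogous to the index-$2$ calculation in \cite{AN}. Assertion (3) then follows formally: by Theorem~\ref{ord3-f}, the invariants $(\mathrm{rank}\,S_{X}^{\sigma},a)$ of the pair $(X,\sigma)$ together with the genus of the fixed curve completely determine the fixed configuration, hence the singularity type and Picard data of $Z$, so the lattice-theoretic classification of non-symplectic order-$3$ $K3$ pairs translates directly into a classification of index-$3$ log del Pezzo surfaces satisfying MSDP.
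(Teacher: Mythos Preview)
This survey does not actually prove the theorem---it is quoted from \cite{OT} without argument---so there is no in-text proof to compare against directly. However, the worked example immediately following the lemma (the weighted hypersurface $(10)\subset\mathbb{P}(1,1,5,9)$) exhibits the method of \cite{OT}, and it proceeds in the opposite order from yours: one first passes to the \emph{right resolution} $Z_{r}\to Z$, a specific further blow-up of the minimal resolution chosen so that the total branch divisor (the strict transform of $C$ together with certain exceptional $(-3)$- and $(-6)$-curves) lies in $|3L|$ for an honest line bundle $L$ on the smooth surface $Z_{r}$; one then takes an ordinary triple cover $\widetilde X\to Z_{r}$ and contracts the $(-1)$-curves lying over the $(-3)$-branch components to reach the $K3$ surface $X$. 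Your route---form the $\mu_{3}$-cover of the singular $Z$ via the reflexive sheaf $\mathcal{O}_{Z}(-K_{Z})$, normalize along the cuspidal locus over $C$, then crepantly resolve the Du~Val points over $\mathrm{Sing}(Z)$---should yield the same $X$, but the right-resolution picture has a concrete payoff for part~(2): the contraction $Z_{r}\to Z$ is already in hand, and the fixed rational curves and isolated $\tfrac{1}{3}(1,1)$ points of $\sigma$ on $X$ correspond one-for-one to the $(-6)$- and $(-3)$-curves on $Z_{r}$, so no abstract MMP run is needed to identify which curves to contract or to verify that the resulting index is exactly~$3$. One small correction to your sketch: for a cyclic triple cover with non-reduced branch $2C$, the pushforward of the structure sheaf of the \emph{normalization} is $\mathcal{O}_{Z}\oplus\mathcal{O}_{Z}(K_{Z})\oplus\mathcal{O}_{Z}(2K_{Z}+C)$, not $\mathcal{O}_{Z}(2K_{Z})$ in the last summand; the required $H^{1}$-vanishing still follows, since $2K_{Z}+C=K_{Z}+(K_{Z}+C)$ with $K_{Z}+C\sim_{\mathbb{Q}}-\tfrac{1}{2}K_{Z}$ ample, so Kawamata--Viehweg applies.
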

By the theorem and Theorem \ref{ord3-f}, we can study log del Pezzo surfaces of index 3
which have the multiple smooth divisor property.
But there exists a log del Pezzo surfaces of index 3 which does not correspond to
a $K3$ surfaces with a non-symplectic automorphism of order 3, hence 
there exists a log del Pezzo surface which does not satisfy the multiple smooth divisor property.

\begin{lmm}
Let $Z$ be a log del Pezzo surface of index 3 which has the multiple smooth divisor property.
Then we have $K_{Z}^{2}=8(g(C)-1)/3$.
\end{lmm}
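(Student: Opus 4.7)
The plan is to extract the self-intersection and canonical intersection of $C$ from the linear equivalence $2C \sim -3K_{Z}$, then feed these into the adjunction formula to solve for $K_{Z}^{2}$ in terms of $g(C)$.

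First I would set up the intersection numbers. Since $Z$ has index $3$, the divisor $3K_{Z}$ is Cartier, and since $C$ lies in the smooth locus of $Z$ (this is where I use the Multiple Smooth Divisor hypothesis that $C$ avoids the singularities), $C$ is also Cartier. Consequently $2C$ is Cartier, and the linear equivalence $2C \sim -3K_{Z}$ given by the hypothesis allows honest intersection computations, provided we interpret $K_{Z}^{2}$ as the rational number $(3K_{Z})^{2}/9$.

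Next I would carry out the two required computations. Squaring gives $(2C)^{2} = (-3K_{Z})^{2}$, i.e.\ $4 C^{2} = 9 K_{Z}^{2}$, so $C^{2} = \tfrac{9}{4}K_{Z}^{2}$. Intersecting $2C \sim -3K_{Z}$ with the Cartier divisor $3K_{Z}$ gives $6\, C\cdot K_{Z} = -9 K_{Z}^{2}$, so $C\cdot K_{Z} = -\tfrac{3}{2}K_{Z}^{2}$. Now apply the adjunction formula for the smooth curve $C$ contained in the smooth locus of $Z$:
\[
2g(C) - 2 \;=\; C\cdot(K_{Z} + C) \;=\; -\tfrac{3}{2}K_{Z}^{2} + \tfrac{9}{4}K_{Z}^{2} \;=\; \tfrac{3}{4}K_{Z}^{2}.
\]
Solving for $K_{Z}^{2}$ yields $K_{Z}^{2} = 8(g(C) - 1)/3$, as claimed.

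There is no serious obstacle here; the only point that needs care is the justification that the classical intersection-theoretic identities $C^{2} = \tfrac{9}{4}K_{Z}^{2}$, $C\cdot K_{Z} = -\tfrac{3}{2}K_{Z}^{2}$, and the smooth adjunction formula are all legitimate, despite $K_{Z}$ itself being only $\mathbb{Q}$-Cartier. The Multiple Smooth Divisor Property was arranged precisely so that $C$ misses the (quotient) singularities, and the index-$3$ condition makes $3K_{Z}$ Cartier, which together suffice to rigorously carry out these intersection-number manipulations on the $\mathbb{Q}$-factorial log del Pezzo surface $Z$.
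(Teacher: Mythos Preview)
Your proof is correct and follows essentially the same route as the paper: derive $C^{2}=\tfrac{9}{4}K_{Z}^{2}$ and $C\cdot K_{Z}=-\tfrac{3}{2}K_{Z}^{2}$ from $2C\sim -3K_{Z}$, then plug into the adjunction formula. You add more explicit justification for why the intersection identities and adjunction are valid on the singular surface (via the index-$3$ and smooth-locus hypotheses), which the paper leaves implicit, but the argument is otherwise identical.
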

\begin{proof}
Assume that the linear system $|-3K_{Z}|$ contains a divisor $2C$ 
where $C$ is a smooth curve which does not intersect the singularities.
Then we have 
\begin{align*}
2g(C)-2&=C^{2}+C.K_{Z}\\
&=\frac{9}{4}C^{2}-\frac{3}{2}K_{Z}^{2}\\
&=\frac{3}{4}K_{Z}^{2}
\end{align*}
by the genus formula.
\end{proof}

\begin{exa}
The weighted projective space $Z_{1}=\mathbb{P}(1,1,3)$ is a log del Pezzo surface of index 3.
But it does not satisfy the multiple smooth divisor property.
Because $K_{Z_{1}}^{2}=25/3$.

We put $Z_{2}=\mathbb{P}(1,2,9)$.
It is easy to see that $Z_{2}$ is a log del Pezzo surface of index 3 and
has singularities at $(0,0,1)$ and $(0,1,0)$.
Note that $|-3/2K_{Z_{2}}|=|\mathcal{O}_{Z_{2}}(18)|$.
Let $C$ be an element of $\mathcal{O}_{Z_{2}}(18)$ defined by
$\{x^{18}+y^{9}+z^{2}+(\text{terms of degree 18})=0\}$
where $x, y$ and $z$ are homogeneous coordinates of $Z_{2}$.
Then the smooth divisor $C$ does not pass through $(0,0,1)$ and $(0,1,0)$.
Hence $Z_{2}$ satisfies the multiple smooth divisor property.
\end{exa}

\begin{exa}
Let $Z$ be a weighted hypersurface $(10)$ in $\mathbb{P}(1,1,5,9)$.
Note that $Z$ is a log del Pezzo surface with a singular point induced by $(0,0,0,1)$
and $\mathcal{O}(K_{Z})\simeq \mathcal{O}_{Z}(10-1-1-5-9)=\mathcal{O}_{Z}(-6)$.
Let $C$ be an element of $\mathcal{O}_{Z}(9)$ defined by 
$\{x^{9}+y^{9}+x^{2}y^{2}z+w+\dots =0 \}$ where 
$x, y, z$ and $w$ are homogeneous coordinates of $\mathbb{P}(1,1,5,9)$.
Then the smooth divisor $C$ does not pass through $(0, 0, 0, 1)$. 
Hence $Z$ satisfies the multiple smooth divisor property.

Let $\nu:\widetilde{Z}\to Z$ be the minimal resolution.
Then we have $K_{\widetilde{Z}}=\nu^{\ast}K_{Z}-1/3E-2/3F$.
Here $E$ and $F$ are smooth rational curves such that $E^{2}=-2$, $F^{2}=-5$ and $E.F=1$.
By blowing-up at the intersection point of $E$ and $F$,
we obtain $Z_{r}\to Z$ which is called the \textit{right resolution}.

Note that $Z_{r}$ has a ($-3$)-curve, a ($-6$)-curve and the strict transform of $C$.
Let $\pi: \widetilde{X} \to Z_{r}$ be the triple cover branched along these curves.
By contracting of the $(-1)$-curve induced from the ($-3$)-curve: 
$\widetilde{X}\to X$, we have a $K3$ surfaces $X$
and the covering transformation induces a non-symplectic automorphism $\sigma$ 
of order 3 which fixes a genus 4 curve, a smooth rational curve and an isolated point.
We remark that these correspond to $C$, $F$ and $E$, respectively.

By Theorem \ref{ord3-f} and the classification of 3-elementary lattices,
we have $S_{X}=S_{X}^{\sigma}=U\oplus A_{2}$.
\end{exa}

\begin{rem}
Fujita and Yasutake \cite{Fujita-Yasutake}
have given complete classification of log del Pezzo surfaces of index 3.
The technique for the classification based on the argument of \cite{Nakayama}.
\end{rem}

\begin{prob}
Under appropriate assumptions, study log del Pezzo surfaces of index 5 corresponding to 
$K3$ surfaces with an non-symplectic automorphism of order 5.
(But it seems that most log del Pezzo surfaces of index 5 do not 
correspond to $K3$ surfaces with an non-symplectic automorphism.)
\end{prob}

\subsection{log Enriques surfaces}
See \cite{Z1, Z2, Taki5} for details.
 
Without loss of generality, we assume that a log Enriques surface $Z$ has no Du Val singular points,
because if $Z' \to Z$ is the minimal resolution of all Du Val singular points of $Z$ then
$Z'$ is also a log Enriques surface of the same index of $Z$.

Let $Z$ be a log Enriques surface of index $I$.
The Galois $\mathbb{Z}/I\mathbb{Z}$-cover
\[ \pi: Y:= \text{Spec}_{{\mathcal{O}}_{Z}} \left( \bigoplus_{i=0}^{I-1}\mathcal{O}_{Z}(-iK_{Z}) \right) \to Z \]
is called the (global) \textit{canonical covering}.
Note that $Y$ is either an abelian surface or a $K3$ surface with at worst Du Val singular points, 
and that $\pi$ is unramified over $Z\setminus \text{Sing}(Z)$.
A log Enriques surface $Z$ is \textit{of type} $A_{m}$ or $D_{n}$ if, by definition,
its canonical cover $Y$ has a  singular point of type $A_{m}$ or $D_{n}$, respectively.

It is interesting to consider the index $I$ of a log Enriques surface.
Blache \cite{Bl, Z1} proved that $I\leq 21$. Thus if $I$ is prime then 
$I=2, 3, 5, 7, 11, 13, 17$ or 19.

\begin{thm}[{\cite{logEnriques19, logEnriques18, OZorder5, 11, 13-19}}]\label{rekishi}
The followings hold:
\begin{itemize}
\item[(1)] There is one log Enriques surface of type $D_{19}$ (resp. $A_{19}$, $D_{18}$), up to isomorphism.
\item[(2)] There are  two log Enriques surfaces of type $A_{18}$, up to isomorphism.
\item[(3)] There are  two log Enriques surfaces of index 5 and type $A_{17}$, up to isomorphism.
\end{itemize}
The followings do not refer to singular points.
But these determine log Enriques surfaces with large prime indices:
\begin{itemize}
\item[(4)]  There are  two maximal log Enriques surfaces of index 11, up to isomorphism.
\item[(5)] If $I$=13, 17 or 19 then there is a unique log Enriques surface of index $I$, up to isomorphism.
\end{itemize}
\end{thm}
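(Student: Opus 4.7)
The strategy is to use the canonical covering $\pi: Y \to Z$, which translates each classification statement into a question about $K3$ surfaces $Y$ equipped with a purely non-symplectic automorphism $\sigma$ of order $I$ whose quotient has the prescribed Du Val singularity type. Since $Y$ must be a $K3$ surface (an abelian surface is ruled out by the large orders $I$ appearing here, via $\Phi(I)$-divisibility on the transcendental lattice of an abelian surface), the whole problem becomes one of classifying pairs $(Y,\sigma)$ up to isomorphism and then reconstructing $Z = Y/\langle\sigma\rangle$.

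First, I would translate the Du Val singularity type on $Y$ into lattice data: an $A_m$ or $D_n$ singularity on $Y$ contributes a root sublattice of $S_Y^\sigma$ (the fixed rational curves after resolution form an $ADE$ configuration), while the transcendental lattice $T_Y$ is constrained by Lemma \ref{sayou}(1), giving $\Phi(I) \mid \rk T_Y$. Combined with $\rk T_Y + \rk S_Y = 22$, this forces very rigid numerical data in the extremal cases. For instance, when $I = 19$ we get $\rk T_Y = 18$ and $\rk S_Y = 4$, leaving essentially no room; when $I=13$ we get $\rk T_Y \in \{12\}$ after combining with the requirement that $S_Y^\sigma$ contain the $A_{m}$/$D_n$ configuration and a polarization. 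The key input is the classification of such invariant/transcendental lattices (the $p$-elementary theory in Theorems \ref{ord2-f}, \ref{ord3-f} and its analogues in \cite{AST,Taki2,Taki3,13-19}), which pins down $T_Y$ up to isometry.

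Second, I would invoke the surjectivity of the period map for $K3$ surfaces together with the Torelli Theorem \ref{gtfk3} (and Remark \ref{Torelli}) to turn the lattice data into a genuine uniqueness statement for $(Y,\sigma)$. Concretely: fixing the embedding of $T_Y$ into the $K3$ lattice $U^{\oplus 3}\oplus E_8^{\oplus 2}$ and fixing the eigenspace decomposition of $\sigma^\ast$ on $T_Y\otimes\mathbb{C}$ determines the period point up to the action of $O(T_Y)$ commuting with $\sigma^\ast$; the quotient is either a point or a low-dimensional locus, and one counts the number of components/isomorphism classes to obtain the count (1 or 2) appearing in each part of the theorem. For items (1), (3), (5) this typically yields a zero-dimensional moduli space with a single component; for items (2) and (4) the extra freedom must be identified explicitly, e.g.\ by producing two non-isomorphic lattice embeddings or two orbits under the relevant isometry group.

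Finally I would pass back from $(Y,\sigma)$ to $Z$: contract the fixed curves of $\sigma$ on $Y$ to produce the Du Val singularities, and verify that $Z$ has the claimed index and singularity type. The uniqueness/counting on $Z$ then inherits directly from that of $(Y,\sigma)$, modulo the easy check that no two non-isomorphic pairs give the same log Enriques surface. The main obstacle, which is handled case-by-case in the cited references \cite{logEnriques19,logEnriques18,OZorder5,11,13-19}, is the step of enumerating isometry orbits of $(\sigma^\ast)$-invariant embeddings of $T_Y$: the large prime cases (13, 17, 19) are genuinely rigid, but for $I=11$ and for type $A_{18}$ inside index 19, where the transcendental lattice has slightly smaller rank or admits a larger isometry group, a careful Nikulin-style discriminant-form calculation is needed to show that exactly two orbits (hence exactly two surfaces) occur rather than one or three.
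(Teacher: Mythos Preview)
Your overall strategy---pass to the canonical cover, reduce to a $K3$ surface with a purely non-symplectic automorphism of order $I$, pin down the lattice data, and invoke Torelli plus surjectivity of the period map---is exactly the route the paper and its references take; the paper says so explicitly and records the $K3$-side output as Theorem~\ref{32511}.

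But your execution of the bookkeeping is off in two places. First, you write ``for type $A_{18}$ inside index $19$''. By the Remark immediately after Theorem~\ref{rekishi}, type $A_{18}$ forces index $I=3$ (and $A_{19}$ forces $I=2$); the doubling in item~(2) is thus a phenomenon for order-$3$ non-symplectic automorphisms, and the discriminant-form calculation you propose would be carried out on the wrong lattice entirely.

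Second, you group item~(3) with items~(1) and~(5) as ``single component'' cases and assert that the count of $Z$'s ``inherits directly from that of $(Y,\sigma)$''. Comparison with Theorem~\ref{32511} shows this fails: in part~(3) there, the pair $(X_5,\langle\sigma_5\rangle)$ satisfying the stated hypotheses is \emph{unique}, yet Theorem~\ref{rekishi}(3) asserts there are \emph{two} log Enriques surfaces of index~$5$ and type~$A_{17}$; an analogous mismatch appears at index~$11$. So the step you dismiss as an ``easy check''---going from the $K3$ pair back to the log Enriques surface via a choice of rational tree to contract---is precisely where the non-trivial counting happens in the cited references, and your outline locates the difficulty in the wrong place.
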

\begin{rem}
If a log Enriques surface is of type $A_{19}$ (resp. $A_{18}$, $D_{18}$ or $D_{19}$) 
then its index is 2 (resp. 3).
\end{rem}

To prove Theorem \ref{rekishi}, we studied non-symplectic automorphisms of $K3$ surfaces, 
because the canonical covering $\pi$ is a cyclic Galois covering of order $I$ which 
acts faithfully on the space $H^{0}(Y,\mathcal{O}_{Y}(K_{Y}))$.
And we have gotten the following.

\begin{thm}[{\cite{logEnriques19, OZorder5, 11, 13-19}}]\label{32511}
Let $\sigma_{I}$ be a non-symplectic automorphism of order $I$ on a $K3$ surface $X_{I}$ and
$X_{I}^{\sigma_{I}}$ be the fixed locus of $\sigma_{I}$;
$X_{I}^{\sigma_{I}}=\{x\in X_{I}|\sigma_{I}(x)=x \}$.
Then the followings hold:
\begin{itemize}
\item[(1)]
If $X_{3}^{\sigma_{3}}$ 
consists of only (smooth) rational curves and possibly some isolated points, 
and contains at least 6 rational curves then 
a pair ($X_{3}$, $\langle \sigma_{3} \rangle$) is unique up to isomorphism.
\item[(2)]
If $X_{2}^{\sigma_{2}}$ 
consists of only (smooth) rational curves 
and contains at least 10 rational curves then 
a pair ($X_{2}$, $\langle \sigma_{2} \rangle$) is unique up to isomorphism.
\item[(3)]
If $X_{5}^{\sigma_{5}}$ 
contains no curves of genus $\geq 2$, but contains at least 3 rational curves
then a pair ($X_{5}$, $\langle \sigma_{5} \rangle$) is unique up to isomorphism.
\item[(4)]
Put $M:=\{x\in H^{2}(X_{11},\mathbb{Z})| \sigma_{11}^{\ast} (x) =x\}$.
A pair ($X_{11}$, $\langle \sigma_{11} \rangle$) is unique up to isomorphism
if and only if $M=U\oplus A_{10}$.
\item[(5)] Pairs ($X_{13}$, $\langle \sigma_{13} \rangle$),  ($X_{17}$, $\langle \sigma_{17} \rangle$) and
($X_{19}$, $\langle \sigma_{19} \rangle$) are unique up to isomorphism, respectively.
\end{itemize}
\end{thm}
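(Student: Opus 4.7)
The plan is to reduce each of the five uniqueness statements to a lattice-theoretic claim via the Global Torelli Theorem (Theorem \ref{gtfk3}) combined with Remark \ref{Torelli}: two pairs $(X_I,\langle\sigma_I\rangle)$ and $(X_I',\langle\sigma_I'\rangle)$ are isomorphic if and only if there is a Hodge isometry $H^2(X_I,\mathbb{Z})\to H^2(X_I',\mathbb{Z})$ intertwining $\sigma_I^{\ast}$ with $(\sigma_I')^{\ast}$ and sending a K\"ahler class to one. Hence in every case it suffices to show uniqueness, up to such equivariant Hodge isometry, of the triple $(H^2(X_I,\mathbb{Z}),\,\sigma_I^{\ast},\,[\omega_{X_I}])$.

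The first step in each part is to pin down the invariant lattice $M:=H^2(X_I,\mathbb{Z})^{\sigma_I^{\ast}}$, which is $I$-elementary for $I$ prime. In part (2) the conditions ``only smooth rational fixed curves'' and ``at least $10$ rational curves'' plug into the formulas of Theorem \ref{ord2-f} to force $(\rk M,a)$ into a narrow numerical range, and the classification of hyperbolic $2$-elementary lattices in \cite{Ni3} then singles out $M$ up to isometry. Part (1) is analogous using Theorem \ref{ord3-f} and the classification of $3$-elementary lattices in \cite{AS}; the threshold ``$\geq 6$'' rational curves is precisely what is needed to exclude competing $3$-elementary lattices with the same rank and $a$. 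Part (3) uses the order-$5$ analogue together with the list of $5$-elementary hyperbolic lattices. In part (4), $M=U\oplus A_{10}$ is prescribed. In part (5) the divisibility $\Phi(I)\mid \rk T_{X_I}$ from Lemma \ref{sayou}(1), combined with $\rk T_{X_I}\leq 22-\rk M$ and $\Phi(I)=12,16,18$ for $I=13,17,19$, forces $\rk T_{X_I}=\Phi(I)$ and hence pins down both $T_{X_I}$ and $M$ uniquely by rank.

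Once $M$ is fixed, the transcendental lattice $T_{X_I}=M^{\perp}_{H^2}$ inherits a faithful $\mathbb{Z}[\zeta_I]$-action whose eigenvalues are exactly the primitive $I$-th roots of unity. For $I=13,17,19$ the rank equals $\Phi(I)$, so $T_{X_I}$ is a rank-one $\mathbb{Z}[\zeta_I]$-module with hermitian form determined by the discriminant-form gluing to $M$, and is therefore unique. For the smaller primes, uniqueness of $T_{X_I}$ as a hermitian $\mathbb{Z}[\zeta_I]$-module of the required rank and discriminant, together with Nikulin's gluing theorem identifying primitive embeddings $M\oplus T_{X_I}\hookrightarrow U^{\oplus 3}\oplus E_{8}^{\oplus 2}$ up to $O(M)\times O(T_{X_I})$, gives uniqueness of the whole $(H^2,\sigma_I^{\ast})$. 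The period $[\omega_{X_I}]\in T_{X_I}\otimes\mathbb{C}$ must lie in the (one-dimensional) $\zeta_I$-eigenspace of $\sigma_I^{\ast}$, so it is canonical up to the commutant action and does not break uniqueness. Torelli then upgrades the resulting equivariant Hodge isometry to an isomorphism of pairs.

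The main obstacle is the case analysis of $(\rk M,a)$ in parts (1)--(3), namely verifying that the stated fixed-locus thresholds are sharp enough to exclude every alternative $I$-elementary lattice: this is precisely what dictates the numerical conditions ``$\geq 6$,'' ``$\geq 10$,'' ``$\geq 3$'' rational curves. A subsidiary but delicate point, most visibly for $I=5,11$, is showing that the hermitian $\mathbb{Z}[\zeta_I]$-module structure on $T_{X_I}$ is unique within its genus, which requires a class-number computation in $\mathbb{Q}(\zeta_I)$. Existence of realizing pairs, needed to make sure the lattice-theoretic picture is non-empty, is supplied by the explicit constructions already worked out in \cite{logEnriques19, OZorder5, 11, 13-19}.
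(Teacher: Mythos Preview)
The paper does not supply its own proof of this theorem: it is a survey, and Theorem~\ref{32511} is stated purely as a compilation of results from \cite{logEnriques19, OZorder5, 11, 13-19}, with no argument given. So there is nothing in the paper to compare your proposal against directly.

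That said, your outline is essentially the strategy those cited papers use: pin down the invariant lattice $M$ from the fixed-locus constraints and the $p$-elementary classification, then use Torelli. A few points deserve tightening. First, your claim that the period lies in a \emph{one-dimensional} $\zeta_I$-eigenspace is correct for $I=3,5,11,13,17,19$ once you have checked $\rk T_{X_I}=\Phi(I)$ in each case (and you should state this check explicitly for $I=3,5$), but it fails as written for $I=2$: there $\sigma_2^{\ast}$ acts as $-1$ on all of $T_{X_2}$, so the relevant eigenspace is two-dimensional. What actually saves part~(2) is that $\rk T_{X_2}=2$ forces $T_{X_2}$ to be positive definite, whence the period domain reduces to a single point up to conjugation (the Shioda--Inose picture for singular $K3$ surfaces). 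Second, the assertion that the hermitian $\mathbb{Z}[\zeta_I]$-module structure on $T_{X_I}$ is unique in its genus is the genuine technical heart in the cited papers, and ``requires a class-number computation'' understates what is involved; in \cite{11} and \cite{OZorder5} this step is handled by explicit geometric construction and comparison rather than pure arithmetic. Third, you have not said why the equivariant Hodge isometry can be arranged to send an ample (K\"ahler) class to an ample class; this is routine via composing with $\pm 1$ and Weyl reflections, but should be mentioned. With these adjustments your sketch matches the literature's approach.
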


These theorems miss the case of $I=7$.
Recently we have the following.
\begin{thm}[\cite{Taki5}]
The followings hold:
\begin{itemize}
\item[(1)] There is, up to isomorphism, only one log Enriques surface of index 7 and type $A_{15}$.
\item[(2)] If $X_{7}^{\sigma_{7}}$ consists of only smooth rational curves and some isolated points
and contains at least 2 rational curves then
a pair ($X_{7}$, $\langle \sigma_{7} \rangle$) is unique up to isomorphism.
\end{itemize}
\end{thm}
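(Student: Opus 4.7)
The plan is to follow the strategy used in Theorem \ref{32511}: for part (2), reduce uniqueness of the pair $(X_{7},\langle\sigma_{7}\rangle)$ to uniqueness of the invariant lattice $S_{X_{7}}^{\sigma_{7}}$ together with the eigenvalue data of $\sigma_{7}^{\ast}$ on $T_{X_{7}}$, and then invoke Remark \ref{Torelli}; for part (1), recover the log Enriques surface from the canonical covering and apply part (2).

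For part (2), since $7$ is prime, $S_{X_{7}}^{\sigma_{7}}$ is a $7$-elementary hyperbolic lattice with discriminant group $(\mathbb{Z}/7\mathbb{Z})^{a}$ in the sense of \cite{Ni1,R-S}. Lemma \ref{sayou}(1) gives $\Phi(7)=6\mid \rk T_{X_{7}}$, so $\rk T_{X_{7}}\in\{6,12,18\}$ and $\rk S_{X_{7}}\in\{16,10,4\}$. Writing $X_{7}^{\sigma_{7}}=\{P_{1},\dots,P_{n}\}\sqcup\mathbb{P}^{1}\sqcup\cdots\sqcup\mathbb{P}^{1}$ with $N\geq 2$ rational components, I would plug the local models of Lemma \ref{sayou}(2)(3) into the topological Lefschetz formula (giving $2N+n$ equal to the trace of $\sigma_{7}^{\ast}$ on $H^{\ast}(X_{7},\mathbb{Q})$) and the holomorphic Lefschetz fixed point formula (an identity involving the indices $(i_{k},j_{k})$ and the contribution from each fixed $\mathbb{P}^{1}$). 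Combining these with the classification of $7$-elementary hyperbolic lattices of rank at most $16$ should produce a short list of admissible data $(r,a,N,n)$, and I would verify that exactly one entry is compatible with the hypothesis $N\geq 2$. Since the eigenvalues of $\sigma_{7}^{\ast}|T_{X_{7}}$ are the primitive seventh roots of unity, each with the fixed multiplicity $\rk T_{X_{7}}/6$, and since the primitive embedding $S_{X_{7}}^{\sigma_{7}}\hookrightarrow H^{2}(X_{7},\mathbb{Z})$ is then unique by Nikulin's theory, Remark \ref{Torelli} delivers uniqueness of the pair.

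For part (1), let $Z$ be a log Enriques surface of index $7$ and type $A_{15}$ and let $\pi\colon Y\to Z$ be its canonical covering. Then $Y$ is a $K3$ surface whose only Du Val singularity is of type $A_{15}$, and the Galois action lifts to a non-symplectic automorphism $\sigma_{7}$ of order $7$ on the minimal resolution $X_{7}\to Y$. The exceptional divisor is a chain of $15$ smooth $(-2)$-curves; since $15=2\cdot 7+1$, a local analysis of the $\mathbb{Z}/7\mathbb{Z}$-action at the $A_{15}$ point, using the normal form of Lemma \ref{sayou}(2), shows that at least two components of this chain are fixed pointwise by $\sigma_{7}$ and that no fixed curve of positive genus can arise. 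Hence the hypotheses of part (2) are satisfied, so $(X_{7},\langle\sigma_{7}\rangle)$ is unique; $Z$ is then recovered as a suitable contraction of the quotient $X_{7}/\langle\sigma_{7}\rangle$ along the image of the exceptional chain, which gives uniqueness of $Z$ itself.

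The main obstacle is the lattice enumeration in part (2): listing all $7$-elementary hyperbolic lattices of the relevant rank and matching their discriminant invariants with the Lefschetz-formula constraints, especially in the case $\rk T_{X_{7}}=6$ where $S_{X_{7}}$ is large and several primitive embeddings are a priori possible. A secondary subtlety in part (1) is confirming, rather than merely stating, that the $A_{15}$ configuration truly forces at least two rational fixed curves; the local calculation must rule out patterns in which all chain components carry a nontrivial $\sigma_{7}$-action (yielding only isolated fixed points on the chain) or in which only a single component is fixed pointwise.
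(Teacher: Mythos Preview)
The paper is a survey and gives no proof of this theorem; it simply cites \cite{Taki5}. What the paper does supply, immediately after the statement, is Example~\ref{exk3w7} and the example following it: an explicit elliptic $K3$ surface $X_{\text{AST}}$ with $S_{X_{\text{AST}}}=U\oplus E_{8}\oplus A_{6}$ carrying a non-symplectic order-$7$ automorphism $\sigma_{\text{AST}}$ whose fixed locus contains two rational curves, together with a contraction of an $A_{15}$ tree on $X_{\text{AST}}$ producing a log Enriques surface $Z_{\text{AST}}$ of index $7$ and type $A_{15}$. These examples establish existence; the uniqueness assertions are deferred entirely to \cite{Taki5}.

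Your outline is the expected strategy and matches the methodology of \cite{AST} and \cite{13-19} that underlies Theorem~\ref{32511}, so there is nothing in the present paper to compare it against beyond confirming that the target lattice is indeed $U\oplus E_{8}\oplus A_{6}$ with $\rk T_{X_{7}}=6$. One point in your sketch should be tightened: Remark~\ref{Torelli} concerns two automorphisms of a \emph{fixed} $K3$ surface and cannot by itself deliver uniqueness of the pair $(X_{7},\langle\sigma_{7}\rangle)$ across different surfaces. What actually closes the argument is that once $\rk T_{X_{7}}=6=\Phi(7)$, each eigenspace of $\sigma_{7}^{\ast}$ on $T_{X_{7}}\otimes\mathbb{C}$ is one-dimensional, so the period line $\mathbb{C}\omega_{X_{7}}$ is determined by the isometry class of $(T_{X_{7}},\sigma_{7}^{\ast})$ up to replacing $\zeta_{7}$ by another primitive root; the Global Torelli Theorem~\ref{gtfk3} then gives uniqueness of $X_{7}$, after which the automorphism is pinned down up to powers. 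This is the Vorontsov-type rigidity of \cite{13-19}, and you should invoke it explicitly rather than Remark~\ref{Torelli} alone. For part~(1), your observation that the graph automorphism group of an $A_{15}$ chain has order $2$, so $\mathbb{Z}/7\mathbb{Z}$ fixes every component setwise, is the correct entry point; the eigenvalue chase along the chain that you flag as a subtlety is what \cite{Taki5} carries out.
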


\begin{exa}[{\cite[Example 6.1 (3)]{AST}}]\label{exk3w7}
Put 
\[ X_{\text{AST}}:y^{2}=x^{3}+\sqrt[3]{-27/4}x+t^{7}-1, \ 
\sigma_{\text{AST}} (x,y,t) = (x, y, \zeta _{7}t). \]
Then $X_{\text{AST}}$ is a $K3$ surface with $S_{X_{\text{AST}}}=U\oplus E_{8}\oplus A_{6}$
and $\sigma_{\text{AST}}$ is a non-symplectic automorphism of order 7.
Note that $X_{\text{AST}}$ has one singular fiber of type I$_{7}$ over $t=0$, one singular fiber of type II$^{\ast}$ over $t=\infty$
and 7 singular fibers of type I$_{1}$ over $t^{7}=1$.
\end{exa}

\begin{exa}
We consider the pair  ($X_{\text{AST}}$, $\langle \sigma_{\text{AST}} \rangle$) in Example \ref{exk3w7}.
Let $f:X_{\text{AST}}\to Y$ be the contraction of the following rational tree $\Delta _{\text{AST}}$ of Dynkin type $A_{15}$ to a point $Q$:
\[ \Gamma_{2}-\Gamma_{3}-\Gamma_{4}-\Gamma_{5}-\Gamma_{6}-\Gamma_{7}-S
-\Theta_{1}-\Theta_{2}-\Theta_{3}-\Theta_{4}-\Theta_{5}-\Theta_{6}-\Theta_{7}-\Theta_{8}, \]
where $S$ is a cross-section, $\Gamma_{i}$ is a component of a singular fiber of type I$_{7}$
and $\Theta_{j}$ is a component of a singular fiber of type II$^{\ast}$.
Here a singular fiber of type  I$_{7}$ is given by $\sum_{i=1}^{7}\Gamma_{i}$
which $\Gamma_{7}$ meets $S$, 
and a singular fiber of type II$^{\ast}$ is given by $\sum_{j=1}^{6}j\Theta_{j}+4\Theta_{7}+2\Theta_{8}+3\Theta_{9}$.
Hence $\Gamma_{7}$ and $\Theta_{6}$ are fixed curves of $\sigma_{\text{AST}}$.

Then $\sigma_{\text{AST}}$ induces an automorphism $\tau$ on $Y$ so that $Y^{\tau}:=\{Q, f(P) \}$
where $P$ is the isolated fixed point of type $P^{2,6}$ on $\Theta_{9}$.
Now the quotient surface $Z_{\text{AST}}:=Y/\tau$ is a log Enriques surface of index 7 and type $A_{15}$.
Note that $Z_{\text{AST}}$ has exactly two singular points
under the two fixed points $Q$ and $f(P)$.
\end{exa}


\end{document}